\newcommand{\Gm}{\Gamma}
\newcommand {\IN}{\mathbb N}  
\newcommand {\IR}{\mathbb R}   
\newcommand {\ZZ}{\mathbb Z}
\newcommand {\TT}{\mathbb T}
\newcommand {\D}{\mbox{${ D}$}}
\newcommand {\E}{\mbox{${ E}$}}
\newcommand{\Ekx}{E_\kappa^{x}}
\newcommand{\Ek}{E_\kappa}
\newcommand{\be}{\beta}
\newcommand{\zd}{\ZZ^d}
\newcommand{\Jt}{\frac1T\int_0^T\mu^{\otimes 2}_{\kappa,\beta,T}(X(t)\!=\!\widetilde{X}(t))dt}
\newcommand{\Ht}{\sum_{x\in{{\mathbb Z}^d}}\int_0^T dW_x(t)\delta_x(X(t))}
\newcommand{\Smx}{\sum_{x\in{{\mathbb Z}^d}}}
\newcommand{\Zkbx}{Z_{\kappa,\beta,t}(x)}
\newcommand{\Zkbt}{Z_{\kappa,\beta,t}}
\newcommand{\ZkbxT}{Z_{\kappa,\beta,T}(x)}
\newcommand{\ZkbT}{Z_{\kappa,\beta,T}}
\newcommand{\Lm}{\Lambda}
\newcommand{\Pski}{\Psi(\kappa,1)}
\newcommand{\JkbT}{J_{\kappa,\beta,T}}
\newcommand{\IkbT}{I_{\kappa,\beta,T}}
\newcommand{\Gkbr}{ \Gm(\be, r) }
\newcommand{\Pskb}{\Psi(\kappa,\be)}
\newcommand{\Pkx}{P_\kappa^x}
\newcommand{\fez}{\frac{e^{\beta H_T(X)}}{Z_{\kappa,\beta,T}}}
\newcommand{\ebT}{\exp\{\beta H_T(X)\}}
\newcommand{\ebt}{\exp\{\beta H_t(X)\}}
\newcommand{\iWx}{\int_0^T dW_x(t)\delta_x(X(t))}
\newcommand{\mthT}{\mu_{\kappa,\beta,T}}
\newcommand{\mkbs}{\mu_{\kappa,\beta,s}}
\newcommand{\mkxbT}{\mu^x_{\kappa,\beta,T}}
\newcommand{\mkbT}{\mu_{\kappa,\beta,T}}
\newcommand{\mkbt}{\mu_{\kappa,\beta,t}}
\newcommand{\Ikb}{\textup{I}_{\kappa,\beta}(r)}
\newcommand{\lt}{\left}
\newcommand{\It}{\frac1T\int_0^T\mu_{\kappa,\beta,t}^{\otimes 2}(X(t)\!=\!\widetilde{X}(t))dt}
\newcommand{\rt}{\right}
\newcommand {\8}{\infty}
\newcommand {\calD}{\mathcal D}
	\newcommand {\NC}{{\mathcal N}\!{\mathcal C}}
\newtheorem{stat}{Statement}
\newtheorem{decth}[stat]{Theorem}
\newtheorem{prop}{Proposition}[section]
\newtheorem{cor}{Corollary}[section]
\newtheorem{conj}{Conjecture}[section]
\newtheorem{lemma}{Lemma}[section]
\newtheorem{remark}{Remark}[section]
\begin{document}

\title{ Overlaps and Pathwise  Localization
 in the Anderson Polymer Model}

\author{Francis Comets\footnote{Corresponding author. 
Universit\'e Diderot - Paris 7, Math\'ematiques, case 7012,
75205 Paris Cedex 13, France.  Partially supported by CNRS, UMR 7599.
\texttt{comets@math.univ-paris-diderot.fr}, \texttt{http://www.proba.jussieu.fr/$\sim$comets/.}}
\and 
Michael Cranston \footnote{Mathematics Dept., University of California, Irvine, CA. Research of second author supported by a grant from NSF, DMS 0854940.  }}

\maketitle

\begin{abstract}
We consider large time behavior of typical paths under the Anderson polymer measure. If $\Pkx$ is the measure induced by rate $\kappa,$ simple, symmetric random walk on $\ZZ^d$ started at $x,$ this measure is defined as
 \[d\mkxbT(X)={\ZkbxT}^{-1} \exp\left\{\beta\int_0^T dW_{X(s)}(s)\right\}d\Pkx(X)\]
where $\{W_x:x\in \ZZ^d\}$ is a field of $iid$ standard, one-dimensional Brownian motions,  $\beta>0, \kappa>0$ and
$\Zkbx$ the normalizing constant.
We establish that the polymer measure gives a macroscopic mass to a small neighborhood of a typical path as $T \to \8$, for parameter values outside the perturbative regime of the random walk, giving a pathwise 
approach to polymer localization, in contrast with existing results. The localization becomes complete as $\frac{\be^2}{\kappa}\to\8$ in the sense that the mass grows to 1.
The proof makes use of the overlap between two independent samples drawn under the Gibbs measure $\mkxbT$,
which can be estimated by the integration by parts formula for the Gaussian environment. 
Conditioning this measure on the number of jumps, we obtain a canonical measure which already shows scaling 
properties, thermodynamic limits, and decoupling of the parameters.  
\end{abstract}
\noindent Keywords and phrases: Brownian polymer, overlap, Malliavin calculus, parabolic Anderson model.\\ 
\noindent AMS Subject classification numbers: Primary 60K35, 60K37; Secondary 60H05 60J65 82B44.  
\bigskip

\section{Introduction}
In this paper we consider a polymer model related to the parabolic Anderson equation. In particular, we give quantitative bounds on the overlap of the polymer measure in terms of an inverse temperature parameter. This gives a quantitative expression for the extent to which the polymer measure concentrates its weight near a particular path at low temperature.  
The Anderson polymer model is a measure on simple, symmetric, continuous-time random walks influenced by a random field, $\mathcal{W}=\{W_x:x\in\zd\},$ of $iid$ Brownian motions defined on some probability space $(\Omega, \mathcal{F}_t,\,Q)$ where the filtration is given by $\mathcal{F}_t=\sigma(\{W_x(s):\,0\le s\le t,\,x\in\zd\}).$ (Expectation with respect to $Q$ will be denoted by $E.$) This measure and related quantities have motivated a huge number of research papers
from many different perspectives. In the seminal reference \cite{CaMo94}  the model is used to give 
a mathematical account to intermittency, i.e. the existence of spots where most of the mass is concentrated. Those spots correspond to favorable configurations  of the field. The model
is also very appealing, with time-space {\it iid} environment  (the Brownian increments)
replaced by the configuration of an interacting particle system \cite{GadH06}, modeling 
a chemical reaction with moving catalysers. It has non trivial large deviation properties
\cite{CrGaMo10}, as a particular random growth model. 
 The one-dimensional totally asymmetric case, where the walker only jumps to the right, has a lower complexity than the symmetric case, as shown by the computations of annealed Lyapunov exponents \cite{BoCo12}; In this case an explicit solution was given in \cite{MoOc07}, with the 
 strongly asymmetric case as a small perturbation \cite{Moreno10}.
We consider the symmetric case and, like \cite{CaTiVi08},  \cite{CH}, \cite{RoTi05} and \cite{SepVa10}, our focus is to view the parabolic Anderson model as   a particular model of directed polymers in random medium, 
and a host of other references which may be found in \cite{CSY} for a general picture.

In order to describe this model,  start with the measure $\Pkx$ to be the measure on the canonical probability space, $\mathcal{D}([0,\infty),\zd)$ of right continuous paths which possess left limits everywhere, with a finite number of jumps of size one only on any finite interval. These are the typical sample paths of the simple symmetric rate $\kappa$ random walk. Here, as is usual, $\Pkx(X_0=x)=1$ and with respect to $\Pkx,$ the canonical process $X(t,\omega)=\omega(t),\,\omega\in\,\mathcal{D}_\infty=\mathcal{D}([0,\infty),\zd)$ is a Markov process with infinitesimal generator $\kappa\Delta$ where $\Delta$ is the discrete Laplacian defined by $\Delta f(x)=\frac{1}{2d}\sum_{||y-x||=1}(f(y)-f(x)).$ The  Anderson polymer model is the Gibbs measure on $\mathcal{D}_T=\mathcal{D}([0,T],\zd)$ defined by 
 \[\mkxbT(f)={\ZkbxT}^{-1}\Ekx\left[f \exp\left\{\beta\int_0^T dW_{X(s)}(s)\right\}\right].\]
for bounded measurable $f:\mathcal{D}_T\to {\IR}.$ The model has three parameters, the  inverse temperature
$\beta \in {\mathbb R}$ measuring the fluctuations of the environment, the polymer length $T$  and  the diffusivity $\kappa \in (0,\8)$ of the path
under the {\it a priori} measure $\Pkx$. The partition function $\ZkbxT$ is given by
\[\ZkbxT=\Ekx\left[\exp\left\{\beta\int_0^T dW_{X(s)}(s)\right\}\right].\] 
By the Feynman-Kac formula, 
\[u(t,x)=\Ekx\left[\exp\left\{\beta\int_0^t dW_{X(t-s)}(s)\right\}\right]\]
is the solution of the  time-dependent  parabolic Anderson equation (or stochastic heat equation)
\begin{eqnarray}\label{pam}
u(t,x)=1+\kappa\int_0^t\Delta u(s,x)ds+\beta\int_0^t u(s,x) \circ dW_x(s),
\end{eqnarray}
where $\circ dW$ denotes the Stratonovich differential of $W$.
In the space-continuous model, the logarithm of $u$ formally solves the Kardar-Parisi-Zhang equation
(KPZ), which is expected to be the scaling limit  of our model in a weak noise limit. In dimension $d=1$,
its distribution has been recently computed in \cite{AmCoQu11}, and a better understanding of the KPZ universality class is being achieved; see \cite{Corwin11} for a review.

%
%
The functions $u(t,x)$ and $\Zkbx$ thus have the same distribution and we will make use of the properties of $u(t,x)$ derived in \cite{CMS} and apply them to $\Zkbx.$
We shall use the notation $\Ek,\,\mkbT$ and $\ZkbT$ when $x=0.$ By spatial homogeneity of the field $\mathcal{W}$ we may confine ourselves to the study of the case $x=0.$ Our results concern the behavior of $\mkbT$ for ${\beta^2}/{\kappa}$ and $T$ large in arbitrary dimension. It will be shown that this measure concentrates its mass near a particular favorite path as ${\beta^2}/{\kappa}\to \infty.$ In a sense this means that there is a channel in the media in which most of the polymer paths reside. The establishment of the concentration is done by examining the overlap defined as
\begin{equation} \label{eq:defJ}
J_{\kappa,\beta,T}\equiv\Jt
\end{equation}
where $\mkbT^{\otimes 2}$ is the product measure of $\mthT$ with itself. The paths $X$ and $\widetilde{X}$ appearing in (\ref{eq:defJ}) are thus independent samples of paths drawn according to the measure $\mkbT.$
The quantity $J_{\kappa,\beta,T}$ is, for two independent samples sharing the same environment, the proportion of time  spent together. We also study another version of the overlap.
Define
\begin{equation} \label{eq:defI}
\IkbT=\It   \,.
\end{equation}
This version of the overlap measures the amount of time  up to $T$ that the endpoint of independent samples drawn with respect to the measure $\mkbt$ agree.

In statistical mechanics,  counterparts of these overlaps can be found, for the Sherrington-Kirkpatrick model and other ones for disordered systems.
Coming via integration by parts, the first overlap revealed most successful in the last decade
 (\cite{Gu01}, \cite{GuTo02}, \cite{Talabook1}, \cite{Talaparisi}) to study the low temperature regime.

Below, we shall see that the overlap in our polymer model arises in a similar fashion. By taking the logarithmic Malliavin derivative of the partition function with respect to $W_x(t)$ one arrives at $J_{\kappa,\beta,T}.$   By taking the logarithmic It\^{o} derivative of the partition function one arrives at $\IkbT.$ Both these overlaps take values in [0,1], they vanish in the limit of large $T$'s under the free measure,
i.e. for $\be=0$, and positivity of each one implies a specific form of localization. 
It is known that positivity of $\IkbT$ is equivalent to positivity of the difference between the annealed and the quenched free energies. Here, we show that positivity of $J_{\kappa,\beta,T}$
essentially amounts to this difference being strictly increasing as a function of $|\be|$.
Then, using the  logarithmic asymptotics 
of the partition function from \cite{CMS}, we find that both versions of the overlap converge to 1 as $\beta^2/\kappa
\to \8$, hence achieving its upper bound. This allows us to give a precise account of { path localization} by identifying
the {\bf favourite end-point},  and the {\bf favourite path} for the polymer. We introduce here  a sequence of measurable functions $y^*_T: [0,T] \to {\mathbb Z}^d$ such that the proportion of time when the polymer is {\it equal} to the favourite path is positive in the localized phase, and 
 converges to 1 as $\beta^2/\kappa\to \8$: 
\begin{equation} \label{eq:pathloc}
\liminf_{T \to \8} E \mkbT \left(
\frac{1}{T} \int_0^T\delta_0\big(X(t)-y^*_T(t)\big)dt \right) \longrightarrow 1 \quad {\rm as} \; \beta^2/\kappa
\to \8.
\end{equation}
The function $y^*_T$, defined in (\ref{def:favouritepath}),  depends on $\kappa, \beta$ and on the environment,
it is called the favourite path. Our statement improves on the literature on polymers by concerning the path itself, not only the terminal location at time $T$ as in \cite{CaHu02} and 
\cite{CSYo}. Also, we can obtain, in this model,  
complete localization in the limit, in the sense of (\ref{eq:pathloc})
where  the right hand side is equal to its maximum value 1.   

We now mention related results on complete localization.  Strong concentration for the  directed polymer in a random environment for parabolic Anderson model (space dependent only) with a Pareto potential was established in  \cite{KLMS}. The main difference is
that there, the favourable sites in the environment have a simple characterization in terms of the potential. In the time-discrete case with heavy tailed potentials (time-space dependent), see \cite{AuLo10} for similar conclusions. When the 
tails are less heavy, the favourite corridors can no longer be characterized by maxima of the potential, they depend in a much more subtle manner on the environment; Though they are not anymore explicit, site localization can still be proved \cite{Va07}. Note that in the discrete case, only little is known on
the random geodesics \cite{New} in first passage percolation, which are the zero-temperature favourite paths. In our parabolic Anderson model, the potential has strong decay, but we can prove strong localization. For the solution of the KPZ equation in one dimension, the distribution of the favourite end-point  has been recently computed in \cite{MoQuRe11}, it is the $\arg \max$
of an ${\rm Airy}_2$ process minus a parabola.

The parabolic Anderson model, compared to other directed polymers, has some nice scaling properties, it also
decouples some parameters due to its Poissonian structure. With a Gaussian potential, it allows a simple integration by parts 
formula, which comes from the Malliavin calculus. 

 The organization of this paper is as follows, in Section $2$ we present our main results. Section $3$ contains preliminaries from the Malliavin calculus giving a first version of the overlap. Section $4$ gives an Ito calculus derivation of the second version of the overlap, and contains the proofs for localization. In Section $5$ we prove the results for the distribution of the number of jumps under the polymer measure, and regularity properties of the favourite end-point and path 
 in Section \ref{sec:attributes}.

\section{Main Results}
From now on, we adopt the notation
\begin{eqnarray}
H_T(\gamma)=\int_0^T dW_{\gamma(s)}(s),\,\,\gamma\in\mathcal{D}_T.
\end{eqnarray}

\subsection{Thermodynamic limits}
Our first goal is to determine the large deviation rate for the number of jumps of paths with respect to the measure $\mthT.$ We begin with the exponential growth rate of $\ZkbT$ conditioned on the number of jumps of the process $X$ up to time $T$ which we denote by $N(T,X).$
\begin{prop} \label{th:existsGm}
For $r \geq 0$, the limit   
\begin{eqnarray}
  \label{def:Gm}
  \Gm( \be,r) &=& \lim_{T \to \8}
 T^{-1} \ln \Ek  \left[ \exp\{\be H_T(X)\}| N(T,X)=[rT] \right]
\end{eqnarray}
exists a.s. and in $ \textup{L}^p,\,\,p \in [1, \8).$ The limit is deterministic, symmetric and convex in $\beta,$ continuous in $(\be,r),$ independent of $\kappa$, and satisfies the scaling relation
  \begin{equation}
    \label{eq:scalingGmseul}
    \Gm(\be, r)= a \Gm(a^{-1/2}\be, a^{-1}r),\qquad a>0.
  \end{equation}
\end{prop}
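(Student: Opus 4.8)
First I would observe that conditioning the rate-$\kappa$ walk on $\{N(T,X)=n\}$ produces a law on paths not involving $\kappa$ at all: the $n$ jump times are the order statistics of $n$ i.i.d.\ uniform points in $[0,T]$, and the $n$ steps are i.i.d.\ uniform over the $2d$ neighbours. Writing $\widehat E_{n,T}$ for this (deterministic in $\kappa$) path average, the conditional expectation in \eqref{def:Gm} is exactly $\widehat E_{[rT],T}[e^{\be H_T(X)}]$, so the $\kappa$-independence is immediate. The scaling relation then follows from Brownian scaling: under $s\mapsto as$ a path on $[0,T]$ with $[rT]$ jumps becomes a path on $[0,T/a]$ with $[(ra)(T/a)]$ jumps, while $H_T(X)\stackrel{d}{=}\sqrt a\,H_{T/a}(X')$ for a fresh environment; dividing $\ln$ by $T$, a short computation gives \eqref{eq:scalingGmseul}.

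\textbf{Existence of the limit.}
Set $a_T=E\big[\ln \widehat E_{[rT],T}[e^{\be H_T}]\big]$; the heart of the matter is to show that $(a_T)$ is superadditive up to negligible corrections. Splitting $[0,T_1+T_2]$ at $T_1$, I would restrict the path average to the event that it makes $[rT_1]$ jumps in $[0,T_1]$ and $[rT_2]$ in $[T_1,T_1+T_2]$ (adjusting the integer parts) and that $X(T_1)=0$. Since the environments on disjoint time intervals are independent and $\mathcal W$ is spatially homogeneous, on this pinned event the average factorizes into an independent bridge average on $[0,T_1]$ and a fresh copy on $[T_1,T_1+T_2]$:
\[
\widehat E_{[r(T_1+T_2)],T_1+T_2}\big[e^{\be H_{T_1+T_2}}\big]\ \ge\ c_{T_1,T_2}\,\widehat E^{\,\mathrm{br}}_{[rT_1],T_1}\big[e^{\be H_{T_1}}\big]\,\widetilde E_{[rT_2],T_2}\big[e^{\be H_{T_2}}\big],
\]
where $c_{T_1,T_2}$ collects the combinatorial splitting factor $\binom{[rT]}{[rT_1]}(T_1/T)^{[rT_1]}(T_2/T)^{[rT_2]}$, of order $T^{-1/2}$ by the local CLT, and the pinning cost $P(X(T_1)=0\mid [rT_1]\text{ jumps})$, which is only polynomially small; thus $\tfrac1T\ln c_{T_1,T_2}\to0$. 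Taking $E[\ln\cdot]$ gives $a_{T_1+T_2}\ge a^{\,\mathrm{br}}_{T_1}+a_{T_2}-o(T)$, and after checking that the bridge and free averages share the same exponential rate (pinning both endpoints costs only $T^{-O(1)}$), an approximate-superadditivity (Fekete/Hammersley) argument yields $a_T/T\to\Gm(\be,r)$. The limit is finite because the annealed identity $E\big[\widehat E_{[rT],T}[e^{\be H_T}]\big]=e^{\be^2T/2}$ and Jensen give $0\le a_T\le \be^2T/2$.

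\textbf{Upgrade and further properties.}
To pass from the mean to almost sure and $\mathrm L^p$ convergence I would use Gaussian concentration. Writing $A_T=\widehat E_{[rT],T}[e^{\be H_T}]$ and letting $\mu_T$ be the associated conditioned polymer measure, the map $W\mapsto \ln A_T$ on $\mathcal H=\mathrm L^2([0,T];\ell^2(\ZZ^d))$ has Malliavin gradient $\be\,\big(\mu_T(X(s)=x)\big)_{s,x}$, whose $\mathcal H$-norm is at most $\be\sqrt T$ because $\sum_x \mu_T(X(s)=x)^2\le1$; hence the functional is $\be\sqrt T$-Lipschitz and Borel--TIS gives $Q\big(|\ln A_T-a_T|>u\big)\le 2\exp\{-u^2/(2\be^2T)\}$. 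Summability along integer $T$ (with interpolation between them) yields a.s.\ convergence to the deterministic constant $\Gm(\be,r)$, and the uniform sub-Gaussian tails give convergence in every $\mathrm L^p$. Symmetry in $\be$ follows from $\mathcal W\stackrel{d}{=}-\mathcal W$, and convexity in $\be$ from convexity of $\be\mapsto\ln A_T$ (a log-moment generating function), inherited in the limit. Repeating the splitting step with $T_i=\lambda_iT$ and $[r_iT_i]$ jumps shows $\Gm(\be,\cdot)$ is concave, hence continuous on $(0,\8)$; joint continuity then follows from $\Gm(\be,r)=r\,\Gm(\be/\sqrt r,1)$ (a consequence of \eqref{eq:scalingGmseul}), the continuity down to $r=0$ requiring in addition that $\Gm(\be,r)\to0=\Gm(\be,0)$, which I would obtain from a linear-in-$T$ control of $\sup_X H_T$ (bounding the free energy gained per jump).

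\textbf{Main obstacle.}
The delicate point is the superadditivity of $(a_T)$: one must simultaneously respect the constraint on the number of jumps and decouple the two time intervals in space, which forces the pinning device and the verification that all combinatorial, entropic and pinning corrections are $e^{o(T)}$. By contrast, once the Lipschitz bound is in hand the concentration step, and with it the a.s.\ and $\mathrm L^p$ upgrade together with the deterministic character of the limit, is routine.
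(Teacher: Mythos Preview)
Your overall architecture---$\kappa$-independence from the explicit description of the conditioned law, scaling from Brownian scaling, superadditivity of the mean plus Gaussian concentration for the a.s.\ and $\mathrm L^p$ upgrade, convexity in $\be$ and concavity in $r$ for continuity---matches the paper's. The concentration step is the same: the paper packages your Lipschitz bound $\|D\ln A_T\|_{L^2}\le |\be|\sqrt T$ as Lemma~\ref{lem:concentration}, and also uses It\^o's formula plus Doob's inequality to interpolate between the sparse times where Borel--Cantelli bites, which you should spell out rather than leave as ``interpolation''.

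The substantive difference is in how the superadditivity of $a_T=E\ln \widehat E_{[rT],T}[e^{\be H_T}]$ is obtained. You pin the path at $X(T_1)=0$ in order to factorise, which produces a \emph{bridge} average on $[0,T_1]$ and forces you to argue separately that bridge and free averages share the same rate. Your justification of that step (``pinning both endpoints costs only $T^{-O(1)}$'') only gives one inequality, $a_T^{\mathrm{br}}\le a_T+O(\ln T)$; the direction you actually need, $a_T^{\mathrm{br}}\ge a_T-o(T)$, does not follow from a naive pinning-cost argument, because the free average could in principle be dominated by bridges to far-away endpoints $x$, and these do not have the same law as the bridge to $0$. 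This can be repaired, but it is real work, and as written it is a gap.

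The paper avoids the issue entirely by \emph{not} pinning. With $U_t=\ln E_\kappa[e^{\be H_t(X)};\,N(t,X)=rt]$ one has, by Markov and then Jensen,
\[
U_{t+s}\ \ge\ U_t+\ln \mu^{(r)}_{\kappa,\be,t}\big[\exp\{U_s\circ\theta_{t,X(t)}\}\big]\ \ge\ U_t+\mu^{(r)}_{\kappa,\be,t}\big[U_s\circ\theta_{t,X(t)}\big],
\]
where $\theta_{t,x}$ is the time--space shift of the environment. Now take the environment expectation: $U_t$ and $\mu^{(r)}_{\kappa,\be,t}$ are $\mathcal F_t$-measurable while $U_s\circ\theta_{t,x}$ uses only increments after time $t$, and by spatial stationarity $E[U_s\circ\theta_{t,x}]=E[U_s]$ \emph{for every} $x$. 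Hence $E U_{t+s}\ge E U_t+E U_s$ exactly, with no pinning, no combinatorial factor, and no bridge-versus-free comparison. Combined with the concentration step this is the content of their ``stochastic superadditive lemma'' (Lemma~\ref{lem:stosup}). The paper then reads off $\Gm$ from $\Lm(\kappa,\be,r)=\Gm(\be,r)-\textup I_\kappa(r)$ via Cram\'er; concavity of $\Gm(\be,\cdot)$ comes from the same splitting with two different jump rates (your last paragraph), and continuity in $(\be,r)$ is deduced, as you do, from the scaling $\Gm(\be,r)=r\,\Gm(r^{-1/2}\be,1)$ together with continuity in $\be$.
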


	Let $ \textup{I}_\kappa$ be the Cram\'er transform (i.e. the large deviation rate function) of the Poisson distribution with parameter $\kappa$,
\begin{eqnarray}
  \label{def:Ikappa}
 \textup{I}_\kappa(r)= r \ln (r/\kappa) - r+\kappa,\qquad r \geq 0.
\end{eqnarray}
We will see that the function $r \mapsto \Gkbr -  \textup{I}_\kappa(r)$ is 
concave on $\IR_+$, and tends to $-\8$ as $r \to +\8$.
\begin{prop}[Free energy]\label{th:valfreeenergy}
 The limit 
$$
\Pskb = \lim_{T \to \8} T^{-1} \ln \ZkbT
$$
exists  a.s. and in $ \textup{L}^p,\,\,p \in [1, \8),$ and is equal to
\begin{eqnarray}
\Pskb= \sup \{ \Gkbr -  \textup{I}_\kappa(r);\,\, r \geq 0\}.
  \label{eq:valfreeenergy}
\end{eqnarray}  
\end{prop}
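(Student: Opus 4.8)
The plan is to read off the free energy from a quenched Laplace analysis of the decomposition of $\ZkbT$ over the number of jumps. Set $p_n(T)=\Ek[\mathbf 1_{\{N(T,X)=n\}}]=e^{-\kappa T}(\kappa T)^n/n!$ and $Y_n(T)=\Ek[\ebT\mid N(T,X)=n]$, so that
\[\ZkbT=\sum_{n\ge0}p_n(T)\,Y_n(T).\]
By Stirling's formula the explicit Poisson weights obey $T^{-1}\ln p_{[rT]}(T)\to-\textup{I}_\kappa(r)$, locally uniformly in $r\ge0$, which is the only place the Cram\'er rate of the Poisson law enters. I will show that the a.s.\ $\liminf$ and $\limsup$ of $T^{-1}\ln\ZkbT$ both equal $S:=\sup_{r\ge0}\{\Gkbr-\textup{I}_\kappa(r)\}$; this yields at once the a.s.\ existence of $\Pskb$ and its value, and the $\textup{L}^p$ convergence will then follow from the Gaussian concentration recalled below.

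For the lower bound I keep a single jump class: for each fixed $r\ge0$, $\ZkbT\ge p_{[rT]}(T)\,Y_{[rT]}(T)$, hence $T^{-1}\ln\ZkbT\ge T^{-1}\ln p_{[rT]}(T)+T^{-1}\ln Y_{[rT]}(T)$. Letting $T\to\8$ and using the a.s.\ convergence $T^{-1}\ln Y_{[rT]}(T)\to\Gkbr$ from Proposition~\ref{th:existsGm} together with the Poisson limit gives $\liminf_T T^{-1}\ln\ZkbT\ge\Gkbr-\textup{I}_\kappa(r)$ a.s. Applying this along a countable dense set of $r$ and invoking the continuity of $\Gkbr-\textup{I}_\kappa(r)$ produces $\liminf_T T^{-1}\ln\ZkbT\ge S$ a.s.

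For the upper bound I first control the tail. Since $H_T(X)$ is, conditionally on the path, centered Gaussian with variance $T$, one has $E[Y_n(T)]=e^{\beta^2T/2}$, so Jensen gives $\Gkbr\le\beta^2/2$ for every $r$; combined with $\textup{I}_\kappa(r)\to\8$ this forces $\Gkbr-\textup{I}_\kappa(r)\to-\8$, so $S$ is attained on a compact $[0,R_0]$ and, choosing $R$ large, the contribution of $\{N(T,X)>RT\}$ is exponentially negligible by Markov's inequality applied to $E\big[\sum_{n>RT}p_n(T)Y_n(T)\big]=e^{\beta^2T/2}\,P_\kappa(N(T,X)>RT)$ and Borel--Cantelli. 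On $[0,R]$ I partition into blocks $[r_j,r_{j+1})$ of width $\delta$ and bound $\ZkbT\le (R/\delta)\max_j Z^{(j)}+\text{tail}$, where $Z^{(j)}=\Ek[\ebT\,\mathbf 1_{\{N(T,X)/T\in[r_j,r_{j+1})\}}]$. Each $\ln Z^{(j)}$ is a Lipschitz functional of the field: its Malliavin derivative in $W_x(t)$ equals $\beta$ times a probability weight $q^{(j)}_t(x)$ with $\sum_x q^{(j)}_t(x)=1$, so $\sum_x\int_0^T (q^{(j)}_t(x))^2\,dt\le T$ and the Cameron--Martin Lipschitz constant squared is at most $\beta^2T$. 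Gaussian concentration then yields $Q(|\ln Z^{(j)}-E\ln Z^{(j)}|>\epsilon T)\le 2e^{-\epsilon^2T/(2\beta^2)}$, and a union bound over the finitely many blocks with Borel--Cantelli reduces matters to the deterministic estimate $\limsup_T T^{-1}E\ln Z^{(j)}\le\sup_{r\in[r_j,r_{j+1}]}\{\Gkbr-\textup{I}_\kappa(r)\}$, up to an error tending to $0$ with $\delta$.

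The heart of the matter, and the step I expect to be the real obstacle, is this last deterministic estimate: $\Gkbr$ is available only as the pointwise limit of $T^{-1}E\ln Y_{[rT]}(T)$, whereas $Z^{(j)}$ averages over all $n$ with $n/T\in[r_j,r_{j+1})$. Bounding the expected maximum of the $\delta T$ sub-Gaussian variables $\ln Y_n(T)$ (variance proxy $\beta^2T$) by $O(\sqrt{T\ln T})=o(T)$, I can replace $T^{-1}E\ln Z^{(j)}$ by $\max_{n/T\in[r_j,r_{j+1})}T^{-1}\big(\ln p_n(T)+E\ln Y_n(T)\big)+o(1)$, so everything comes down to showing that $r\mapsto T^{-1}E\ln Y_{[rT]}(T)$ converges to $\Gkbr$ \emph{locally uniformly} rather than merely pointwise. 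I would establish this equicontinuity either from the concavity of $r\mapsto\Gkbr$ (a consequence of the concavity of $\Gkbr-\textup{I}_\kappa(r)$ and the convexity of $\textup{I}_\kappa$) via a Dini-type argument, or by a direct surgery/concatenation estimate bounding $|E\ln Y_{n+1}(T)-E\ln Y_n(T)|$ uniformly in $n$ and $T$. Once this uniform convergence is in hand, letting $\delta\to0$ gives $\limsup_T T^{-1}\ln\ZkbT\le S$ a.s., which together with the lower bound proves a.s.\ convergence to $\Pskb=S$; the $\textup{L}^p$ statement follows because the same Gaussian concentration gives uniform (in $T$) control of all moments of $T^{-1}\ln\ZkbT$ about its mean, yielding uniform integrability and hence $\textup{L}^p$ convergence.
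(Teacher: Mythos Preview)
Your overall strategy---decompose by jump number, keep one term for the lower bound, truncate the tail and use Gaussian concentration block by block for the upper bound, then reduce to local uniform convergence of $T^{-1}E\ln Y_{[rT]}(T)$ to $\Gkbr$---is exactly the route the paper takes. You have also correctly located the only genuine difficulty: upgrading the pointwise limit in Proposition~\ref{th:existsGm} to a locally uniform one in $r$.

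The gap is in how you propose to close it. Your Dini-type suggestion rests on the claim that $r\mapsto\Gkbr$ is concave ``as a consequence of the concavity of $\Gkbr-\textup{I}_\kappa(r)$ and the convexity of $\textup{I}_\kappa$''. This inference is backwards: $\Gamma=(\Gamma-\textup{I}_\kappa)+\textup{I}_\kappa$ is a sum of a concave and a convex function, which says nothing. In fact concavity of $\Lambda=\Gamma-\textup{I}_\kappa$ only yields $\Gamma''(\be,r)\le \textup{I}_\kappa''(r)=1/r$, not $\Gamma''\le 0$. Even if $\Gamma(\be,\cdot)$ were concave, a Dini argument needs monotonicity in $T$, or else concavity of the \emph{approximants} $\Gamma_T(\be,\cdot)$, neither of which you have. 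Your alternative surgery bound $|E\ln Y_{n+1}(T)-E\ln Y_n(T)|\le C$ would indeed suffice, but you give no argument for it.

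The paper handles this point differently and more cleanly. From the scaling identity (\ref{eq:scalingGmseul}) one has the \emph{finite-$T$} analogue $\Gamma_T(\be,ar)=a\,\Gamma_{aT}(a^{-1/2}\be,r)$ (this is exactly (\ref{gammascale}) after taking expectations). Thus local uniformity in $r$ at fixed $\be$ is equivalent to local uniformity in $\be$ at fixed $r$. But $\be\mapsto\Gamma_T(\be,r)$ is convex for every $T$ (H\"older), and pointwise convergence of convex functions to a finite limit is automatically locally uniform. This closes the gap without any surgery or Dini hypothesis. Once you insert this step, your argument and the paper's coincide; the $\textup{L}^p$ claim via concentration and uniform integrability is fine.
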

In particular, for all positive $a$,
  \begin{eqnarray}
\nonumber 
\Pskb&=& a \Psi( a^{-1}\kappa,a^{-1/2}\be)\\
  &=& \beta^2 \Psi(\beta^{-2} \kappa,1). \nonumber\\
    &=& \kappa \Psi(1, \kappa^{-1/2}\beta).     \label{eq:scalingPsiseul}
  \end{eqnarray}
From this we obtain the quenched large deviation rate function for the distribution of the number of  jumps of the polymer.
\begin{decth}[Large deviations]\label{th:ldp} Define $ \textup{I}_{ \kappa,\be}$ to be the convex function
$$ \Ikb= -\Gkbr +  \textup{I}_\kappa(r) + \Pskb.
$$
Then 

  \begin{equation} \label{eq:ldpp}
\lim_{T \to \8, n/T \to r} T^{-1} \ln \mkbT \big(N(T,X)=n\big) = 
-  \Ikb,\,\,a.s.. 
  \end{equation}
\vspace{.15in}

Moreover, for a.e. realization of the environment $\mathcal{W}=\{W_x(\cdot):\, x \in 
{\mathbb Z}^d\}$, and all subsets
$B \subset \IR_+$,

  \begin{eqnarray*} \label{eq:ldp}
- \inf_{B^o} \, \Ikb &\leq&
\liminf_{T \to \8} T^{-1} \ln  \mkbT \big(N(T,X)/T \in B\big)\\
&\leq&
\limsup_{T \to \8} T^{-1} \ln  \mkbT \big(N(T,X)/T \in B\big)\\
&\leq&- \inf_{\bar B}\, \Ikb.
  \end{eqnarray*}
\end{decth}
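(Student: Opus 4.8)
The plan is to read off the large deviations of $N(T,X)/T$ under the Gibbs measure from the two preceding propositions, using a G\"artner--Ellis scheme that exploits the Poissonian decoupling of $\kappa$. The key device is a \emph{tilting identity}: weighting the rate-$\kappa$ walk by $e^{\lambda N(T,X)}$ turns it into a rate-$\kappa e^\lambda$ walk (Radon--Nikodym factor $e^{\lambda N(T,X)-\kappa(e^\lambda-1)T}$), while leaving the conditional law of $H_T(X)$ given the jump structure untouched. Hence, for every $\lambda\in\IR$,
\[
\Ek\!\left[e^{\lambda N(T,X)}\,\ebT\right]=e^{\kappa(e^\lambda-1)T}\,Z_{\kappa e^\lambda,\beta,T}.
\]
Dividing by $\ZkbT$, taking $T^{-1}\ln$, and applying Proposition~\ref{th:valfreeenergy} with diffusivity $\kappa e^\lambda$ gives, a.s. and in $\textup{L}^p$,
\[
\Lambda(\lambda):=\lim_{T\to\8}T^{-1}\ln \mkbT\!\left(e^{\lambda N(T,X)}\right)=\kappa(e^\lambda-1)+\Psi(\kappa e^\lambda,\beta)-\Pskb,
\]
a convex function (limit of logarithmic moment generating functions) that is finite for every real $\lambda$.

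Next I would record the atomic asymptotics. Since $N(T,X)$ is Poisson$(\kappa T)$ under the free walk, the factorization
\[
\mkbT\big(N(T,X)=n\big)=\ZkbT^{-1}\,P_\kappa\big(N(T,X)=n\big)\,\Ek\!\left[\ebT\,\big|\,N(T,X)=n\right]
\]
feeds Cram\'er's theorem for the Poisson law (first factor, rate $-\textup{I}_\kappa$), Proposition~\ref{th:existsGm} (second factor, limit $\Gkbr$ as $n/T\to r$) and Proposition~\ref{th:valfreeenergy} (denominator, $-\Pskb$) into
\[
\lim_{T\to\8,\,n/T\to r}T^{-1}\ln\mkbT\big(N(T,X)=n\big)=\Gkbr-\textup{I}_\kappa(r)-\Pskb=-\Ikb,
\]
which is exactly (\ref{eq:ldpp}); the continuity of $\Gm$ in $(\be,r)$ lets me pass from the values $n=[rT]$ to arbitrary sequences $n_T/T\to r$. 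The lower bound in the set statement then follows immediately: for open $B$ and $r\in B$, choosing integers $n_T/T\to r$ with $n_T/T\in B$ gives $\mkbT(N(T,X)/T\in B)\ge\mkbT(N(T,X)=n_T)$, so $\liminf_T T^{-1}\ln\mkbT(N(T,X)/T\in B)\ge-\Ikb$, and optimizing over $r\in B^o$ produces $-\inf_{B^o}\Ikb$.

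For the upper bound I would avoid any uniform control of the atoms and use exponential Chebyshev built on $\Lambda$. Because $\Lambda$ is finite on all of $\IR$, the family is exponentially tight, so it suffices to treat closed sets; splitting a closed $B$ into its parts to the right and left of the minimizer and applying $\mkbT(N(T,X)/T\ge a)\le e^{-\lambda aT}\mkbT(e^{\lambda N(T,X)})$ for $\lambda\ge0$ (respectively $\lambda\le0$) yields $\limsup_T T^{-1}\ln\mkbT(N(T,X)/T\in B)\le-\inf_{\bar B}\Lambda^{*}$ with $\Lambda^{*}(r)=\sup_\lambda(\lambda r-\Lambda(\lambda))$. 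It then remains to identify $\Lambda^{*}$ with $\Ikb$. Using $\Psi(\kappa e^\lambda,\beta)$ from Proposition~\ref{th:valfreeenergy} together with $\textup{I}_{\kappa e^\lambda}(r)=\textup{I}_\kappa(r)-\lambda r+\kappa(e^\lambda-1)$, one checks that $\Lambda(\lambda)+\Pskb=\sup_r(\lambda r+\Gkbr-\textup{I}_\kappa(r))$, the convex conjugate of the convex closed function $r\mapsto\textup{I}_\kappa(r)-\Gkbr$. Biconjugation (valid because $\Gkbr-\textup{I}_\kappa(r)$ is concave and continuous, as recorded before Proposition~\ref{th:valfreeenergy}) gives $\Lambda^{*}(r)=\textup{I}_\kappa(r)-\Gkbr+\Pskb=\Ikb$, matching the lower bound and closing the proof.

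The main obstacle I anticipate is organizing the almost-sure statements so that one null set serves the whole family: Propositions~\ref{th:existsGm} and~\ref{th:valfreeenergy} hold a.s. for each fixed $(\be,r)$ or each fixed diffusivity, whereas here I need $\Lambda(\lambda)$ to exist simultaneously for all $\lambda$ and the atomic limit to hold simultaneously for all $r$. I would handle this by establishing the limits on a countable dense set of parameters and extending by the continuity of $\Gm$ and the convexity, hence local Lipschitz continuity, of $\Lambda$ and of $\kappa'\mapsto\Psi(\kappa',\beta)$. A secondary delicate point, should one prefer a purely G\"artner--Ellis derivation of the lower bound instead of the direct atomic argument, is the differentiability of $\lambda\mapsto\Lambda(\lambda)$ (equivalently of the free energy in its diffusivity argument); this is precisely why I obtain the lower bound from the atomic asymptotics, which needs no such regularity, and reserve Chebyshev/G\"artner--Ellis for the upper bound.
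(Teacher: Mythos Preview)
Your proof is correct. The atomic limit \eqref{eq:ldpp} and the lower bound over open sets are handled exactly as the paper does (and as its terse proof indicates): write $\mkbT(N(T,X)=n)$ as the product of the Poisson mass, the conditional partition function, and $\ZkbT^{-1}$, then invoke Propositions~\ref{th:existsGm} and~\ref{th:valfreeenergy}. Where you genuinely diverge from the paper is in the upper bound. The paper says only that one argues ``in a manner similar to Proposition~\ref{th:valfreeenergy}'', meaning: truncate the range of $N(T,X)/T$ to a compact window via Jensen and the concentration Lemma~\ref{lem:concentration}, then use the uniform convergence $\Gm_T\to\Gm$ (obtained from the scaling relation and convexity in $\be$) together with concentration to control the finitely many atoms inside the window, and finish by Laplace's method. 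Your route is different and arguably cleaner: the tilting identity $\Ek[e^{\lambda N(T,X)}\ebT]=e^{\kappa(e^\lambda-1)T}Z_{\kappa e^\lambda,\beta,T}$ turns the cumulant generating function into a free energy at a shifted diffusivity, and the biconjugation $\Lambda^\ast=\Ikb$ drops out of the variational formula \eqref{eq:valfreeenergy} via the algebraic identity $\textup{I}_{\kappa e^\lambda}(r)=\textup{I}_\kappa(r)-\lambda r+\kappa(e^\lambda-1)$. This buys you the upper bound without any appeal to concentration or to uniform convergence of $\Gm_T$; the price is the bookkeeping you flag at the end (a single null set for all $\lambda$), which is indeed handled by convexity of the prelimit $\lambda\mapsto T^{-1}\ln\mkbT(e^{\lambda N(T,X)})$ and a countable dense set. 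Both approaches are sound; yours exploits the Poissonian decoupling of $\kappa$ more explicitly, while the paper's relies on the concentration infrastructure already built in Section~\ref{sec:prel}.
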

\begin{remark} \label{rem:21} \normalfont  (i) ${\bf{Annealed \,bound}}.$
By Jensen's inequality, it is readily checked that both 
$$ \Pskb \leq \beta^2/2 \quad {\rm and} \quad\;
\Gm(\be, r) \leq \beta^2/2$$
hold. These, in addition to (\ref{eq:valfreeenergy}), imply that
\begin{equation} \label{eq:qu=an}
\Pskb =\beta^2/2 \iff
\Gm(\be, \kappa) =\beta^2/2,
\end{equation}
and, in such a case, $\Ikb$ has a unique minimum at $r=\kappa$.

(ii) ${\bf{Weak \ versus \ strong \,disorder}}.$ 
From (\ref{ibp}), (\ref{eq:tard}) it follows that
\begin{equation} \label{eq:gamma_c}
\Pskb =\beta^2/2 \iff \beta^2/\kappa \leq \Upsilon_c\;, 
\end{equation}
for some critical value $\Upsilon_c \in [0, \8)$ depending only on the dimension; finiteness
of $\Upsilon_c$ can be seen from (\ref{lambda}) below. 

 In dimension $d \geq 3$,  it is known by second moment method 
\cite{CH} that $\ZkbT \exp\{-T\be^2/2\}$ converges to a positive limit, so that the equality holds in 
the left member of (\ref{eq:qu=an}) for $\beta^2/\kappa $ small.  Hence,  $\Upsilon_c>0$
in that case.

 In dimension $d=1, 2$, it has recently been proved \cite{Bertin12} that $\Upsilon_c=0$,
 by extending the techniques introduced in 
  \cite{Lac10} for discrete models.
\end{remark}

\subsection{Overlaps and phase transition}
Recall the definitions (\ref{eq:defI}) of $\IkbT$ and (\ref{eq:defJ})  of $J_{\kappa,\beta,T}$.
Even though the quantity $J_{\kappa,\beta,T}$ places more restriction on the paths, measuring the fraction of time together from $0$ to $T$ with respect to $\mkbT^{\otimes 2},$ for large $\beta^2/\kappa$ it is essentially the same size as $\IkbT.$ The advantage of $J_{\kappa,\beta,T}$
is that  it involves a single Gibbs measure and therefore contains pathwise information.
We will prove the following result in Sections \ref{sec:prel} and  \ref{sec:ito}. We remark  that (\ref{eq:valI}) was also proven in \cite{CH}. Also, discrete time versions of  (\ref{eq:defJ82}) were established in \cite{CaHu02} and in \cite{Lac10}.
\begin{prop} \label{th:It}(Overlaps)
(i) For all $\beta$ and $\kappa$ the limit 
\begin{equation}
\widetilde{I}_{\kappa,\beta,\infty}=\lim_{T\to\infty}\IkbT \label{eq:limex}
\end{equation}•
 exists almost surely and is nonrandom, and is equal to
 \begin{equation} \label{eq:valI}
\widetilde{I}_{\kappa,\beta,\infty}=
1- \frac{2 }{\beta^2} \Psi(\kappa,\beta).
\end{equation}
(ii) The limit
\begin{eqnarray} \label{eq:defJ8}
\widetilde{J}_{\kappa,\beta,\infty}=\lim_{T\to\infty}E\left[\JkbT\right]
\end{eqnarray}
exists for all $\kappa$ and all $\beta$ except for an at most countable set of values of $\beta^2/\kappa$, and 
\begin{equation}
\label{eq:defJ82}
\widetilde{J}_{\kappa,\beta,\infty} =1-\beta^{-1} \frac{\partial }{\partial \beta} \Psi(\kappa,\beta).
\end{equation}
\end{prop}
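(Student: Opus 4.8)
The plan is to obtain both identities by differentiating $\ln \ZkbT$, once in the time variable (It\^o calculus, giving (i)) and once in $\be$ (Gaussian integration by parts, giving (ii)), and then to let $T\to\8$ using Proposition~\ref{th:valfreeenergy}. In both cases the overlap is identified with a derivative of the free energy, so the limiting values $1-\tfrac{2}{\be^2}\Pskb$ and $1-\be^{-1}\partial_\be\Pskb$ appear automatically.

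For part (i) I would first record the endpoint identity $\mkbt(X(t)=x)=\Zkbx/\Zkbt$, where $\Zkbx=\Ek[\delta_x(X(t))\ebt]$ is the point-to-point partition function, so that the integrand of $\IkbT$ is exactly $\Smx \mkbt(X(t)=x)^2$. Conditionally on the path, $H_t(X)$ is a continuous martingale with bracket $t$; applying It\^o's formula to $\ebt$, taking $\Ek$ and using a stochastic Fubini gives $d\Zkbt=\be\Smx\Zkbx\,dW_x(t)+\tfrac{\be^2}{2}\Zkbt\,dt$. A second application to $\ln\Zkbt$, using that the martingales driven by distinct $W_x$ are orthogonal, yields
\begin{equation*}
\ln\ZkbT=\be\int_0^T\Smx \mkbt(X(t)=x)\,dW_x(t)+\frac{\be^2}{2}T-\frac{\be^2}{2}\int_0^T\Smx \mkbt(X(t)=x)^2\,dt ,
\end{equation*}
with no boundary term since $\Zkbt\big|_{t=0}=1$. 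Recognizing the last integral as $T\,\IkbT$ and rearranging expresses $\IkbT$ in terms of $T^{-1}\ln\ZkbT$ and the stochastic integral $N_T=\int_0^T\Smx \mkbt(X(t)=x)\,dW_x(t)$. To finish, I would send $T\to\8$: the free-energy term converges to $\Pskb$ by Proposition~\ref{th:valfreeenergy}, while $\langle N\rangle_T=\int_0^T\Smx\mkbt(X(t)=x)^2\,dt\le T$, so by the Dambis--Dubins--Schwarz representation $N_T=B_{\langle N\rangle_T}$ together with the law of large numbers for Brownian motion, $N_T/T\to0$ a.s. This gives $\IkbT\to 1-\tfrac{2}{\be^2}\Pskb$ almost surely, which is (\ref{eq:valI}).

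For part (ii) I would instead compute $E[\JkbT]$ by Gaussian integration by parts. Writing $\partial_\be\ln\ZkbT=\mkbT(H_T(X))$ and using the Malliavin derivatives $D_s^x H_T(X)=\delta_x(X(s))$ and $D_s^x\ZkbT=\be\ZkbT\,\mkbT(X(s)=x)$, I would apply the identity $E[MG]=E\langle DM,DG\rangle$ (the inner product being over $[0,T]\times\zd$) with $M=H_T(X)$ and $G=\ebT/\ZkbT$, after which $D_s^xG=\be\,\ebT\ZkbT^{-1}\big[\delta_x(X(s))-\mkbT(X(s)=x)\big]$. Using $\delta_x(X(s))^2=\delta_x(X(s))$, then $\Ek$ and $\Smx\mkbT(X(s)=x)=1$, the expression collapses to
\begin{equation*}
E\big[\partial_\be\ln\ZkbT\big]=\be T-\be\int_0^T E\big[\mkbT^{\otimes2}(X(s)=\widetilde{X}(s))\big]\,ds=\be T\big(1-E[\JkbT]\big),
\end{equation*}
so that $E[\JkbT]=1-\be^{-1}\,\partial_\be\big(T^{-1}E[\ln\ZkbT]\big)$ once $\partial_\be$ and $E$ are interchanged.

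It remains to pass to the limit. The functions $\psi_T(\be)=T^{-1}E[\ln\ZkbT]$ are convex in $\be$ (as normalized logarithms of Laplace transforms) and, by Proposition~\ref{th:valfreeenergy}, converge pointwise to the convex limit $\Pskb$; the standard fact that a pointwise limit of convex functions has converging derivatives at every differentiability point of the limit then gives $\partial_\be\psi_T(\be)\to\partial_\be\Pskb$ off the at most countable set where $\Pskb$ fails to be differentiable in $\be$, which by the scaling relation (\ref{eq:scalingPsiseul}) is described by countably many values of $\be^2/\kappa$. Combined with the previous display this yields $\widetilde{J}_{\kappa,\be,\8}=1-\be^{-1}\partial_\be\Pskb$, i.e. (\ref{eq:defJ82}). \emph{The main obstacle is precisely this interchange of limit and $\be$-derivative}: unlike part (i), where convergence is almost sure, here one controls only $E[\JkbT]$ and must lean on convexity, and it is exactly this that forces the countable exceptional set into the statement. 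A secondary, routine point is to justify the Gaussian integration by parts and the differentiation $\partial_\be E[\ln\ZkbT]=E[\partial_\be\ln\ZkbT]$, which I would handle with uniform integrability bounds on $\ln\ZkbT$ and $\mkbT(H_T(X))$.
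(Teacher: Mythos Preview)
Your proposal is correct and follows essentially the same route as the paper: for (i) you apply It\^o's formula to $\ln\Zkbt$, identify the overlap term, and kill the martingale contribution via its linearly bounded bracket (the paper argues $\langle M\rangle_T\le\be^2T$ directly rather than via DDS, but this is the same point); for (ii) you obtain $\partial_\be E[\ln\ZkbT]=\be T(1-E[\JkbT])$ by Gaussian integration by parts and then pass to the limit using convexity of $\be\mapsto T^{-1}E[\ln\ZkbT]$ and pointwise convergence to $\Pskb$, exactly as in the paper. You also correctly isolate the reason for the countable exceptional set.
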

The first step for (i) is an It\^o calculation, and for (ii) it's an integration by parts formula, which relies on the Malliavin calculus.
\medskip

We say that a function $f: \IR \to \IR$ is non-constant around $b \in \IR$ if, in all neighborhoods of 
$b$ there is some $b'$ with $f(b')\neq f(b)$. Define the subsets of $\IR$,
\begin{equation*}
\NC_1=\left\{b : \frac{\beta^2}{2}-\Psi(1,\beta) \; {\rm is\ non\!-\!constant\ around\ } b\right\}, \quad
\NC_\kappa=\kappa^{1/2}\NC_1 .
\end{equation*}
By (\ref{eq:scalingPsiseul}), $\beta \mapsto \frac{\beta^2}{2}-\Psi(\kappa,\beta)$ is non-constant around
every point in $\NC_\kappa$, and only there. Note that, by (\ref{eq:defJ82}), this function is non-decreasing as $|\be|$ is increased. Also, if $\beta$ is not in the subdifferential of $\Psi(\kappa,\cdot)$  at $\beta$, then $\beta \in \NC_\kappa$.
 \begin{cor}  \label{cor2.1}
 (i) We have, with $\Upsilon_c$ from (\ref{eq:gamma_c}),
 \begin{equation*}
\widetilde{I}_{\kappa,\beta,\infty} > 0 \iff \beta^2/\kappa > \Upsilon_c .
\end{equation*}
(ii) Similarly, for all $\kappa >0$,
\begin{equation*}
\Upsilon_c=\inf\{ \beta^2/\kappa: \widetilde{J}_{\kappa,\beta,\infty}>0 \} . 
\end{equation*}
Moreover, if 
$\beta \in \NC_\kappa$, there exists a sequence $\beta_n$ converging to $\beta$ such that
$\widetilde{J}_{\kappa,\beta_n,\infty}>0$. Finally, if the derivative of  $\Psi(\kappa,\cdot)$
at $\beta$ exists and is different from $\beta$, then $\widetilde{J}_{\kappa,\beta,\infty}>0$. 
 \end{cor}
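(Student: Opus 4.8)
The plan is to derive everything from the two overlap identities in Proposition~\ref{th:It}, the annealed bound $\Psi(\kappa,\beta)\le\beta^2/2$ of Remark~\ref{rem:21}, and the characterization (\ref{eq:gamma_c}) of $\Upsilon_c$. Throughout I use the symmetry in $\beta$ to restrict to $\beta>0$ and abbreviate $g(\beta)=\frac{\beta^2}{2}-\Psi(\kappa,\beta)$; this is continuous, non-negative, and non-decreasing on $(0,\infty)$ by (\ref{eq:defJ82}). Part (i) is then purely algebraic: Proposition~\ref{th:It}(i) gives
\[
\widetilde{I}_{\kappa,\beta,\infty}=1-\frac{2}{\beta^2}\Psi(\kappa,\beta)=\frac{2}{\beta^2}\,g(\beta),
\]
so $\widetilde{I}_{\kappa,\beta,\infty}>0$ iff $\Psi(\kappa,\beta)<\beta^2/2$; since $\Psi(\kappa,\beta)\le\beta^2/2$ always, this is the same as $\Psi(\kappa,\beta)\neq\beta^2/2$, which by (\ref{eq:gamma_c}) is exactly $\beta^2/\kappa>\Upsilon_c$.

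For part (ii) I will rewrite Proposition~\ref{th:It}(ii) as $\widetilde{J}_{\kappa,\beta,\infty}=\beta^{-1}g'(\beta)$, valid wherever $\Psi(\kappa,\cdot)$ is differentiable. The structural fact I want to exploit is that $g$ is \emph{locally absolutely continuous}: $\Psi(\kappa,\cdot)$ is convex, being by Proposition~\ref{th:valfreeenergy} a supremum over $r$ of the maps $\beta\mapsto\Gm(\be,r)-\textup{I}_\kappa(r)$, each convex in $\beta$ by Proposition~\ref{th:existsGm}; hence $\Psi(\kappa,\cdot)$ is locally Lipschitz, $g$ is locally absolutely continuous, and $g(\beta_2)-g(\beta_1)=\int_{\beta_1}^{\beta_2}g'(\beta)\,d\beta$. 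Granting this, the final assertion of (ii) is immediate: if $\partial_\beta\Psi(\kappa,\beta)$ exists and is $\neq\beta$ then $g'(\beta)\neq0$, so $\widetilde{J}_{\kappa,\beta,\infty}=\beta^{-1}g'(\beta)\neq0$, and since the overlap lies in $[0,1]$ it is strictly positive.

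Next I handle the $\NC_\kappa$ assertion. If $\beta\in\NC_\kappa$ then $g$ is non-constant in every neighborhood of $\beta$; being monotone, $g$ must strictly increase on some subinterval of each neighborhood, whence $\int g'>0$ over that subinterval by absolute continuity, forcing $g'>0$ on a set of positive Lebesgue measure there. Selecting such a point $\beta_n$ inside $(\beta-\tfrac1n,\beta+\tfrac1n)$ gives $\beta_n\to\beta$ with $\widetilde{J}_{\kappa,\beta_n,\infty}=\beta_n^{-1}g'(\beta_n)>0$.

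Finally, for the infimum formula put $\beta_c=\sqrt{\kappa\Upsilon_c}$. When $\beta^2/\kappa<\Upsilon_c$, (\ref{eq:gamma_c}) makes $g$ vanish on a whole neighborhood of $\beta$, so $g'(\beta)=0$ and $\widetilde{J}_{\kappa,\beta,\infty}=0$; this yields $\inf\{\beta^2/\kappa:\widetilde{J}_{\kappa,\beta,\infty}>0\}\ge\Upsilon_c$. Conversely $g(\beta_c)=0$ while $g(\beta)>0$ for all $\beta>\beta_c$ by the computation in (i), so $g$ strictly increases on each $(\beta_c,\beta_c+\delta)$ and the argument just used produces points $\beta\in(\beta_c,\beta_c+\delta)$ with $\widetilde{J}_{\kappa,\beta,\infty}>0$; letting $\delta\downarrow0$ gives the reverse inequality. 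I expect the only real obstacle to be the regularity bookkeeping: justifying absolute continuity of $g$ so that increase of the monotone $g$ truly forces $g'>0$ on a positive-measure set (ruling out Cantor-type behavior), and matching the almost-everywhere defined $g'$ with the claim that $\widetilde{J}_{\kappa,\beta,\infty}$ is defined off an at most countable set of values of $\beta^2/\kappa$, namely the kinks of the convex function $\Psi$.
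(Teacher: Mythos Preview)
Your proof is correct and follows essentially the same route as the paper. The only stylistic difference is that the paper invokes the integral identity (\ref{eq:formint}) directly---equivalently $g(\beta)=\int_0^\beta r\,\widetilde{J}_{\kappa,r,\infty}\,dr$---which was already established in Section~\ref{sec:prel}, so your absolute-continuity bookkeeping (via convexity of $\Psi$) is simply a re-derivation of that formula; once you have it, the positivity arguments for the $\NC_\kappa$ and infimum claims are identical to the paper's.
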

 
 We also have asymptotic estimates on the overlaps.
 
 \begin{cor} \label{th4}
As $\beta^2/\kappa \to\infty$, we have (i)
\begin{equation*}
\widetilde{I}_{\kappa,\beta,\infty} = 1-\frac{\alpha^2}{2\ln (\beta^2/\kappa)}(1+o(1)) ,
\end{equation*}
and (ii)
\begin{eqnarray} \label{eq:limJ8}
\widetilde{J}_{\kappa,\beta,\infty}= 1-{\mathcal O}\lt(\frac{1}{\ln (\beta^2/\kappa)}\rt) .
\end{eqnarray}
\end{cor}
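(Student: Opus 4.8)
The plan is to read off both estimates directly from the closed forms in Proposition~\ref{th:It}, reducing everything to the large-disorder behaviour of the free energy $\Psi(\kappa,\beta)$. The single analytic input I would use is the logarithmic asymptotics of the partition function from \cite{CMS}, recorded in \eqref{lambda}, which I read as
\begin{equation*}
\Psi(\kappa,\beta)=\frac{\alpha^2\beta^2}{4\ln(\beta^2/\kappa)}\,(1+o(1)),\qquad \beta^2/\kappa\to\8 .
\end{equation*}
By the scaling relation \eqref{eq:scalingPsiseul}, $\Psi(\kappa,\beta)=\kappa\,\Psi(1,\beta/\sqrt\kappa)$, so it suffices to control $\Psi(1,\gamma)$ as $\gamma=\beta/\sqrt\kappa\to\8$; one checks from \eqref{eq:valI} and \eqref{eq:defJ82} together with this scaling that $\widetilde I_{\kappa,\beta,\8}=1-2\gamma^{-2}\Psi(1,\gamma)$ and $\widetilde J_{\kappa,\beta,\8}=1-\gamma^{-1}\Psi'(1,\gamma)$, so both overlaps depend only on $\beta^2/\kappa$, and this is exactly the regime covered by \eqref{lambda}.

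For part (i) there is essentially nothing beyond substitution. Starting from \eqref{eq:valI}, namely $\widetilde I_{\kappa,\beta,\8}=1-\tfrac{2}{\beta^2}\Psi(\kappa,\beta)$, I would insert the asymptotics above to obtain
\begin{equation*}
\frac{2}{\beta^2}\Psi(\kappa,\beta)=\frac{\alpha^2}{2\ln(\beta^2/\kappa)}\,(1+o(1)),
\end{equation*}
which is the claimed expansion. The only point to check is that the $(1+o(1))$ error is preserved under multiplication by $2/\beta^2$, which is immediate.

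Part (ii) is the real work, because \eqref{eq:defJ82} gives $\widetilde J_{\kappa,\beta,\8}=1-\beta^{-1}\partial_\beta\Psi(\kappa,\beta)$ in terms of the derivative, whereas \eqref{lambda} controls only $\Psi$ itself. Here I would exploit convexity: since each $\beta\mapsto\Gm(\beta,r)$ is convex (Proposition~\ref{th:existsGm}) and $\Psi(\kappa,\beta)=\sup_{r\ge0}\{\Gm(\beta,r)-\textup{I}_\kappa(r)\}$ by \eqref{eq:valfreeenergy}, the function $\beta\mapsto\Psi(\kappa,\beta)$ is convex. Consequently, for $0<\beta'<\beta<\beta''$ one has the difference-quotient sandwich
\begin{equation*}
\frac{\Psi(\kappa,\beta)-\Psi(\kappa,\beta')}{\beta-\beta'}\le \partial_\beta\Psi(\kappa,\beta)\le \frac{\Psi(\kappa,\beta'')-\Psi(\kappa,\beta)}{\beta''-\beta},
\end{equation*}
valid wherever the derivative exists (hence off the at most countable set excluded in Proposition~\ref{th:It}(ii)). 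Taking $\beta'=(1-\e)\beta$ and $\beta''=(1+\e)\beta$ and applying \eqref{lambda} at the three points, while using $\ln((1\pm\e)^2\beta^2/\kappa)=\ln(\beta^2/\kappa)(1+o(1))$ as $\beta^2/\kappa\to\8$ for fixed $\e$, bounds both ends by a constant multiple of $\beta/\ln(\beta^2/\kappa)$. This yields $\beta^{-1}\partial_\beta\Psi(\kappa,\beta)=\mathcal O\big(1/\ln(\beta^2/\kappa)\big)$ and hence \eqref{eq:limJ8}.

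The step I expect to be delicate is precisely this transfer from $\Psi$ to its $\beta$-derivative. Convexity gives the correct order $1/\ln(\beta^2/\kappa)$ at once, matching the size of $1-\widetilde I_{\kappa,\beta,\8}$ and confirming the remark that the two overlaps are comparable for large $\beta^2/\kappa$; extracting the sharp constant $\alpha^2/2$ would require letting $\e\to0$ after $\beta^2/\kappa\to\8$ in the sandwich, so I would claim only the order, as in the statement. One should also record that $\partial_\beta\Psi\ge0$ (monotonicity in $|\beta|$, consistent with the sign discussion following the definition of $\NC_\kappa$), so that $0\le 1-\widetilde J_{\kappa,\beta,\8}=\mathcal O\big(1/\ln(\beta^2/\kappa)\big)$ and the bound is genuinely one giving $\widetilde J_{\kappa,\beta,\8}\uparrow 1$.
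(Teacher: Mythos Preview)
Your argument is correct. For (i) it coincides verbatim with the paper's proof, which simply combines \eqref{eq:valI} with \eqref{eq:huit}. For (ii) both the paper and you rely on the convexity of $\beta\mapsto\Psi(\kappa,\beta)$ and the asymptotics \eqref{eq:huit}, but the convexity is exploited differently: the paper works through the integral formula \eqref{eq:formint} and the monotonicity \eqref{eq:decr} of $r\mapsto r[1-\widetilde J_{\kappa,r,\infty}]=\partial_r\Psi(\kappa,r)$, bounding $\Psi(\kappa,\beta)\ge\int_{\beta/2}^\beta r[1-\widetilde J_{\kappa,r,\infty}]\,dr\ge\tfrac{\beta^2}{4}\,[1-\widetilde J_{\kappa,\beta/2,\infty}]$ and then inserting \eqref{eq:huit}; you instead bound the derivative directly by the secant inequality $\partial_\beta\Psi(\kappa,\beta)\le[\Psi(\kappa,(1+\e)\beta)-\Psi(\kappa,\beta)]/(\e\beta)$ and apply \eqref{eq:huit} at both endpoints. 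The paper's route is marginally cleaner because it avoids handling the $o(1)$ terms in a difference, while your secant approach is the standard convexity trick and makes the dependence on differentiability (the countable exceptional set in Proposition~\ref{th:It}(ii)) more transparent. Either way the constant in the ${\mathcal O}$ is not sharp, as you correctly note.
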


\subsection{End-point and path localization}

For fixed $\kappa, \be$ we define the {\bf favourite end-point} $x^*(t)$
for the polymer at time $t$ by
\begin{equation}
  \label{def:favourite}
  x^*(t) = \arg\max \left\{ E_\kappa \left[ \exp \{ \beta H_t(X)\}\delta_x(X(t)) \right]
: x \in {\mathbb Z}^d \right\},
\end{equation}
taking the first argument $x$ in the lexicographic order in case of 
multiple maxima. By definition, $x^*(t)$ maximizes the distribution
$\mkbt(X(t)=x)$ of the end-point location. 
Noting that for $t\le T,$ 
\[E_\kappa[e^{\beta H_T(X)}]=\sum_{x\in{{\ZZ^d}}} E_\kappa[e^{\beta H_T(X)}\delta_x(X(t))]\]
is almost surely finite, we see that there is at least one maximizer.  We will see in Proposition \ref{prop:1} (i) that, in fact, 
the spatial rate of decay  is superexponential. 

We introduce  for each $T\geq 0$ a new object, the {\bf favourite path} with time horizon $T$, defined as
 \begin{equation}
  \label{def:favouritepath}
  y^*_T(t) = \arg\max \left\{ E_\kappa \left( \exp \{ \beta H_T(X)\}\delta_x(X(t)) \right)
; x \in {\mathbb Z}^d \right\}.
\end{equation}
At each time $t \in[0,T]$, it maximizes $\mkbT(X(t)=x)$; it does exist by a similar argument. The path $t\to  y^*_T(t) $ is a.s. piecewise constant on $[0,T].$ Indeed, if at time $t$ there is only one maximizer $x$ of  $f(x,t,T)=E_\kappa \left[ \exp \{ \beta H_t(X)\}\delta_x(X(t)) \right],$ then by continuity 
$x$ will the be the unique maximizer for times close to $t.$ In Proposition (\ref{prop:1}), $(iv)$ below, we show that  $t \to  f(x,t,T)$ is $C^1$ a.s.  (denote by f' the derivative in $t$.) Now, assume that for  $x \neq y,    f'(x,t,T)-f'(y,t,T)$ has a density. 
Then it is different from $0$ a.s., and so one maximizer will win over the other ones for times close to $t.$ We won't go into the details of the existence of the density, since we won't explicitly use the just stated claims. Otherwise, we know very little about the path $t\to y^*_T(t).$

We start with technical results.
Denote $f(x,t)=E_\kappa \left[ \exp \{ \beta H_t(X)\}\delta_x(X(t)) \right]$ and
$f(x,t,T)= E_\kappa \left[ \exp \{ \beta H_T(X)\} \delta_x(X(t)) \right]$ for short notations.

\begin{prop} \label{prop:1}
(i) For $a<\ln 2$ there exists $C_0=C_0(t,{\mathcal W})$ such that
$$
f(x,t) \leq C_0 \exp - \{ a |x| \ln |x| \},\qquad x \in \ZZ^d.
$$
(ii) For $a<\ln 2$ there exists $C_1=C_1(T,{\mathcal W})$ such that
$$
f(x,t,T) \leq C_1 \exp - \{ a |x| \ln |x| \},\qquad x \in \ZZ^d, t \leq T.
$$
(iii)  The function $t \mapsto f(x,t)$ is almost surely H\"older continuous of every order 
less than $1/2$.\\
(iv)  The function $t \mapsto f(x,t,T)$ is almost surely of ${\mathcal C}^1$ class on $(0,T)$. 
\end{prop}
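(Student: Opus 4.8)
The plan is to handle the spatial bounds (i),(ii) by a jump-counting estimate, and the temporal regularity (iii),(iv) by stochastic calculus at a fixed site $x$.

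For (i) and (ii), the geometric input is that the event $\{X(t)=x\}$ forces $N(t,X)\ge\|x\|_1\ge|x|$, since every jump has size one. I would condition on $\{N(T,X)=m\}$ (with $T$ replaced by $t$ for part (i)): on this event the path stays in the ball $\{|y|\le m\}$ and occupies at most $m+1$ sojourn intervals, so writing $H_T(X)$ as the sum of the Brownian increments over these intervals gives the pathwise bound $|H_T(X)|\le 2(m+1)M_m$, where $M_m=\sup\{|W_y(s)|:|y|\le m,\ s\le T\}$. Since $\#\{|y|\le m\}$ grows polynomially, a reflection/Borel--Cantelli estimate yields $M_m=O(\sqrt{\log m})$ a.s., whence $e^{\beta H_T(X)}\le\exp\{C(\mathcal W,T)\,m\sqrt{\log m}\}$ on $\{N(T,X)=m\}$. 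Combining with $P_\kappa(N(T,X)=m)=e^{-\kappa T}(\kappa T)^m/m!$ and Stirling's formula, the $m$-th term is at most $\exp\{-m\ln m\,(1-o(1))\}$. Summing over $m\ge|x|$ (the series being dominated by its first term) gives $f(x,t,T)\le C_1(\mathcal W,T)\exp\{-a|x|\ln|x|\}$ for every $a<1$, hence a fortiori for every $a<\ln 2$; the estimate is uniform in $t\le T$ because $P_\kappa(N(T,X)=m,\,X(t)=x)\le P_\kappa(N(T,X)=m)$ does not involve $t$. Part (i) is the identical computation with $T$ replaced by $t$.

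For (iii) I would use that, for fixed $x$, the map $t\mapsto f(x,t)=E_\kappa[e^{\beta H_t}\delta_x(X(t))]$ is the $x$-component of the solution of the stochastic heat equation (\ref{pam}); in It\^o form it satisfies $df(x,t)=(\kappa\Delta f(x,t)+\tfrac{\beta^2}{2}f(x,t))\,dt+\beta f(x,t)\,dW_x(t)$. The moment bounds on the partition function from \cite{CMS} control $\sup_{t\le T}E[f(x,t)^{2p}]$, so the Burkholder--Davis--Gundy inequality applied to the martingale part together with the Lipschitz-in-time drift give $E[|f(x,t)-f(x,s)|^{2p}]\le C_p|t-s|^{p}$ for every $p\ge 1$. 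Kolmogorov's continuity criterion then yields, for a.e. environment, H\"older continuity of $t\mapsto f(x,t)$ of every order $\alpha<(p-1)/(2p)$; letting $p\to\infty$ gives every order $\alpha<1/2$.

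For (iv) the point is that the $t$-dependence of $f(x,t,T)=E_\kappa[e^{\beta H_T}\delta_x(X(t))]$ is more rigid. Conditioning the walk at time $t$ and using the Markov property together with the independence of the Brownian increments after $t$ yields the exact pointwise (in $\mathcal W$) factorization
\[
f(x,t,T)=f(x,t)\,\psi(x,t,T),\qquad \psi(x,t,T)=E_\kappa^{x}\Big[\exp\big\{\beta\textstyle\int_t^T dW_{Y(s)}(s)\big\}\Big].
\]
Here $f(x,\cdot)$ carries the It\^o martingale part $\beta f\,dW_x$, while $\psi(x,\cdot,T)$, being a partition function run from $t$ to $T$, is a backward semimartingale whose martingale part is $-\beta\psi\,dW_x$ (this follows by time-reversing the environment and applying the forward equation (\ref{pam}) to $u\mapsto\psi(x,T-u,T)$). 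Applying It\^o's product formula, the two first-order Brownian contributions cancel exactly,
\[
d\big(f(x,t)\psi(x,t,T)\big)=\big(\psi A+fB-\beta^2 f\psi\big)\,dt,
\]
with $A=\kappa\Delta f+\tfrac{\beta^2}{2}f$ and $B$ the continuous drift of $\psi$; since $f,\psi,\Delta f,\Delta\psi$ are continuous in time by (iii), the integrand is continuous, so $t\mapsto f(x,t,T)$ is the integral of a continuous function, i.e. of class $\mathcal C^1$ on $(0,T)$. The main obstacle is the rigorous backward-in-time It\^o calculus needed to identify the martingale part of $\psi$ as $-\beta\psi\,dW_x$; once that cancellation is justified, the gain of regularity from H\"older-$1/2$ in part (iii) to $\mathcal C^1$ in part (iv) follows.
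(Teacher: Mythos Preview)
Your treatment of (i)--(ii) differs from the paper's but is correct and in fact sharper. The paper argues via first moments: it bounds $Q(\sup_{2^n\le\|x\|<2^{n+1}}f(x,t)>e^{-bn2^n})$ by Markov's inequality together with the annealed identity $E[e^{\beta H_t(X)}]=e^{\beta^2t/2}$, reducing matters to the Poisson tail $P_\kappa(N(t,X)\ge 2^n)$ and Borel--Cantelli over dyadic shells. Your pathwise bound via $M_m=O(\sqrt{\log m})$ avoids the annealing step and delivers the a.s.\ estimate directly; it yields every $a<1$, whereas the dyadic bookkeeping in the paper produces only $a<\ln 2$. For (ii) the paper replaces $\delta_x(X(t))$ by $\{\tau_x\le T\}$ to obtain uniformity in $t$; your observation that $P_\kappa(N(T,X)=m,\,X(t)=x)\le P_\kappa(N(T,X)=m)$ achieves the same end.

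For (iii) both arguments rest on the same SPDE for $f(x,\cdot)$; the paper simply observes that since $f>0$ the stochastic integral $\int_0^\cdot f\,dW_x$ is a time change of Brownian motion, so $f$ inherits the H\"older regularity of $W_x$. Your BDG/Kolmogorov route is a legitimate alternative.

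Part (iv) is where your proposal has a genuine gap, and you have correctly identified it. The factorization $f(x,t,T)=f(x,t)\psi(x,t,T)$ is fine, but $f(x,\cdot)$ is adapted to the forward filtration $\mathcal F_t$ while $\psi(x,\cdot,T)$ is adapted to the backward filtration $\sigma(W_y(s)-W_y(t):s\ge t)$. There is no off-the-shelf It\^o product rule for a forward semimartingale times a backward one, and the heuristic cancellation of $\beta f\,dW_x$ against ``$-\beta\psi\,dW_x$'' cannot be made rigorous without a substantial detour through two-sided stochastic calculus. The paper sidesteps this with a different idea: it multiplies $f(z,t,T)$ by the $t$-independent factor $e^{-\beta W_x(T)}$, which amounts to replacing each $W_y$ by $W_y-W_x$ in the Hamiltonian. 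After this gauge change one has the factorization $f^x(x,t,T)=f^x(x,t)\,g^x(x,t,T)$, and the crucial point is that \emph{at the specific site $z=x$} neither factor carries a stochastic integral:
\[
f^x(x,t)=\delta_x(0)+\kappa\int_0^t\Delta_z f^x(x,s)\,ds,\qquad g^x(x,t,T)=1+\kappa\int_t^T\Delta_z g^x(x,s,T)\,ds.
\]
Both are therefore $C^1$ in $t$ by ordinary calculus, their product is $C^1$, and multiplying back by the constant-in-$t$ prefactor $e^{\beta W_x(T)}$ recovers $f(x,t,T)$. The subtraction of $W_x$ is the device that makes the cancellation you are aiming for automatic rather than something to be justified.
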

We comment on some observations on the  favourite attributes $x^*$ and $ y^*_T$.
Both depend on $\kappa,\,\beta$ (also $T$ for the second one) and on the environment 
$\mathcal{W}.$ Both  have long jumps: $x^* \notin \mathcal{D}_\8$ a.s., and $y^*_T  \notin \mathcal{D}_T$ with positive probability for all $T$.
By time continuity in (iii-iv) and since for a measurable $F(\omega, x)$, the (smallest) maximizer is a measurable function of $\omega,$ both $(t, \omega)\mapsto y_T^*(t)$ and $(t,\omega) \mapsto x^*(t)$ are measurable functions from $[0,T] \times \Omega$ to ${{\ZZ^d}}.$
Observe that the functions $y^*_T$ and $x^*$ are equal at time $t=T$, but they are not related otherwise. In general, $x^*(T/2)$ is different from $y^*_T(T/2)$.
The end-point process is $\mathcal{F}$-adapted,  
with $\mathcal{F}=(\mathcal{F}_t)_{t \geq 0}, \mathcal{F}_t=\sigma\left(\{W_x(s):0\le s\le t,\,x\in {{\ZZ^d}}\}\right).$
The other one , being only $\mathcal{F}_T$-measurable,  is anticipating.   


Now, here is a fundamental difference between the two. 
The mapping $t \mapsto x^*(t)$ has oscillations at those times $t$ when there are many maximizers: in view of Proposition \ref{prop:1} (iii),
the set of jump times then looks locally like the set of zeros of Brownian motion.  
In contrast, from differentiability in Proposition \ref{prop:1} (iv), we see that $ t \mapsto y^*_T(t)$
has no oscillations. The favourite path is much smoother than the favourite end-point viewed
as a process.

\medskip

Coming to our main results, we prove that the polymer concentrates  on the favorite end-point and path in the strong disorder 
region (loosely speaking), and overwhelmingly as $\beta^2/\kappa \to\infty$.
\begin{decth} \label{th:localization} (favourite site and path)

(i) If $\beta^2/\kappa>\Upsilon_c$, there exists a constant $C=C(\beta^2/\kappa)>0$ such that
\begin{equation} \label{eq:endpointloc}
\liminf_{T \to \8}  \frac{1}{T} \int_0^T \mkbt( X(t) = x^*(t)) dt \geq C \quad a.s.
\end{equation}
On the contrary, for  $\beta^2/\kappa \leq \Upsilon_c$, the left-hand side of (\ref{eq:endpointloc}) converges to 0.
Finally, for all $\epsilon>0$,  for all $\beta^2/\kappa$ large enough, we have a.s.,
\begin{eqnarray}
  (1-\epsilon)\;  \frac{\alpha^2}{4\ln (\beta^2/\kappa)}
&\leq&  \liminf_{T\to\infty}
 \frac{1}{T} \int_0^T \mkbt( X(t) \neq x^*(t)) dt \nonumber \\ \nonumber
&\leq&  \limsup_{T\to\infty}\frac{1}{T} \int_0^T \mkbt( X(t) \neq x^*(t)) dt\\
&\leq& (2+\epsilon)\; \frac{\alpha^2}{4\ln (\beta^2/\kappa)} \;.\label{eq:complendpointloc} 
\end{eqnarray}
(ii) For all $\kappa>0$ and $\beta \in \NC_\kappa$, there exists a sequence $\beta_n \to \beta$ such that
\begin{equation} \label{eq:pathloc1}
\liminf_{T\to \8} E \mu_{\kappa,\beta_n,T}  \left( \frac{1}{T} \int_0^T \delta_0(X(t)-  y^*_T(t))dt\right) >0.
\end{equation}
On the contrary, for $\beta_0 \notin \NC_\kappa$, then $\lim_{T\to \8} E \mkbT  \left( \frac{1}{T} \int_0^T \delta_0(X(t)-  y^*_T(t))dt\right)  =0$ for all $\beta$ in a neighborhood of $\beta_0$.
Finally, for all $\epsilon>0$,  for all $\beta^2/\kappa$ large enough, we have a.s.,
\begin{eqnarray}
  (1-\epsilon)\;  \frac{\alpha^2}{4\ln (\beta^2/\kappa)}
&\leq &
\nonumber
1- \limsup_{T\to \8} E \mkbT  \left( \frac{1}{T} \int_0^T \delta_0(X(t)-  y^*_T(t))dt\right)  \\  \nonumber
&\leq& 1-  
\liminf_{T\to \8} E \mkbT  \left( \frac{1}{T} \int_0^T \delta_0(X(t)-  y^*_T(t))dt\right)  \\
 \label{eq:pathloc2}
& \leq& (2+\epsilon)\; \frac{\alpha^2}{4\ln (\beta^2/\kappa)} 
\end{eqnarray}

\end{decth}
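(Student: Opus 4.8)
My plan is to derive both parts from the overlap limits of Proposition~\ref{th:It} and Corollaries~\ref{cor2.1}--\ref{th4} through a single elementary inequality: for any probability vector $(p_x)_{x\in\zd}$ with top mass $p^*=\max_x p_x$, one has $(p^*)^2\le\sum_x p_x^2\le p^*$. I apply this at each time $t$ to the endpoint law $p_x=\mkbt(X(t)=x)$. By the definition \eqref{def:favourite} of the favourite endpoint, $p^*=\mkbt(X(t)=x^*(t))$, while $\sum_x p_x^2=\mkbt^{\otimes 2}(X(t)=\widetilde X(t))$. Writing $A_T=\frac1T\int_0^T\mkbt(X(t)=x^*(t))\,dt$, integrating in $t$ and applying Cauchy--Schwarz in the form $A_T^2\le\frac1T\int_0^T(p^*(t))^2\,dt\le\IkbT$ gives the two-sided bound $\IkbT\le A_T\le\IkbT^{1/2}$.

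Part (i) is then immediate from Proposition~\ref{th:It}(i): letting $T\to\8$ and using $\IkbT\to\widetilde I_{\kappa,\beta,\infty}$ a.s., I get $\liminf_T A_T\ge\widetilde I_{\kappa,\beta,\infty}$ and $\limsup_T A_T\le\widetilde I_{\kappa,\beta,\infty}^{1/2}$. By Corollary~\ref{cor2.1}(i) this yields \eqref{eq:endpointloc} with $C=\widetilde I_{\kappa,\beta,\infty}>0$ for $\beta^2/\kappa>\Upsilon_c$, and $A_T\to0$ for $\beta^2/\kappa\le\Upsilon_c$ since then $\widetilde I_{\kappa,\beta,\infty}=0$. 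For the complementary estimate \eqref{eq:complendpointloc}, the same bounds give $1-\widetilde I_{\kappa,\beta,\infty}^{1/2}\le\liminf_T(1-A_T)\le\limsup_T(1-A_T)\le 1-\widetilde I_{\kappa,\beta,\infty}$; Corollary~\ref{th4}(i) then supplies $1-\widetilde I_{\kappa,\beta,\infty}=\frac{\alpha^2}{2\ln(\beta^2/\kappa)}(1+o(1))$ and $1-\widetilde I_{\kappa,\beta,\infty}^{1/2}=\frac{\alpha^2}{4\ln(\beta^2/\kappa)}(1+o(1))$, which reproduce the lower constant $1/4$ and upper constant $1/2$ after absorbing the $o(1)$ into $\epsilon$ for $\beta^2/\kappa$ large.

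For part (ii) I repeat the argument with the fixed-horizon marginals $q_x(t)=\mkbT(X(t)=x)$, so that $q^*(t)=\mkbT(X(t)=y^*_T(t))$ by \eqref{def:favouritepath} and $\frac1T\int_0^T q^*(t)\,dt=\mkbT\left(\frac1T\int_0^T\delta_0(X(t)-y^*_T(t))\,dt\right)=:L_T$ is exactly the path-localization functional. The identical reasoning gives $\JkbT\le L_T\le\JkbT^{1/2}$ for each realization of the environment; averaging and using Jensen's inequality for the concave square root yields $E[\JkbT]\le E[L_T]\le(E[\JkbT])^{1/2}$. By Proposition~\ref{th:It}(ii), $E[\JkbT]\to\widetilde J_{\kappa,\beta,\infty}$ off the exceptional countable set of values of $\beta^2/\kappa$, so $\liminf_T E[L_T]\ge\widetilde J_{\kappa,\beta,\infty}$ and $\limsup_T E[L_T]\le\widetilde J_{\kappa,\beta,\infty}^{1/2}$. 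The positivity \eqref{eq:pathloc1} follows from Corollary~\ref{cor2.1}(ii): for $\beta\in\NC_\kappa$ I pick $\beta_n\to\beta$ with $\widetilde J_{\kappa,\beta_n,\infty}>0$. Conversely, if $\beta_0\notin\NC_\kappa$ then $\frac{\beta^2}{2}-\Psi(\kappa,\beta)$ is constant on a neighborhood of $\beta_0$, so $\partial_\beta\Psi(\kappa,\beta)=\beta$ and hence $\widetilde J_{\kappa,\beta,\infty}=0$ by \eqref{eq:defJ82} throughout that neighborhood, forcing $E[L_T]\to0$ there.

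The main obstacle is the quantitative estimate \eqref{eq:pathloc2}. The sandwich gives $1-\widetilde J_{\kappa,\beta,\infty}^{1/2}\le\liminf_T(1-E[L_T])\le\limsup_T(1-E[L_T])\le 1-\widetilde J_{\kappa,\beta,\infty}$, so the whole estimate reduces to the leading order of $1-\widetilde J_{\kappa,\beta,\infty}$. Corollary~\ref{th4}(ii) only records this as $\mathcal O(1/\ln(\beta^2/\kappa))$, whereas both constants in \eqref{eq:pathloc2} require the sharper two-sided asymptotic $1-\widetilde J_{\kappa,\beta,\infty}=\frac{\alpha^2}{2\ln(\beta^2/\kappa)}(1+o(1))$, which in turn gives $1-\widetilde J_{\kappa,\beta,\infty}^{1/2}=\frac{\alpha^2}{4\ln(\beta^2/\kappa)}(1+o(1))$. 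I would obtain this refinement by feeding the sharp asymptotics of $\Psi(\kappa,\beta)$ from \cite{CMS} into the exact formula \eqref{eq:defJ82} and controlling $\partial_\beta\Psi$ via the convexity of $\beta\mapsto\Psi(\kappa,\beta)$. The upper bound alone is easier, since $\widetilde J_{\kappa,\beta,\infty}\ge\widetilde I_{\kappa,\beta,\infty}$ (checked directly from \eqref{eq:valI} and \eqref{eq:defJ82}) already gives $1-\widetilde J_{\kappa,\beta,\infty}\le 1-\widetilde I_{\kappa,\beta,\infty}$ and hence the constant $1/2$ from Corollary~\ref{th4}(i); the genuinely delicate direction is the lower bound, namely that path localization is strictly incomplete at finite $\beta^2/\kappa$, which is exactly where the matching constant in the $\widetilde J$-asymptotic is indispensable.
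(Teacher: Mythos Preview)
Your reduction is essentially the paper's. There the sandwich is written in complementary form: with $a_T=1-\IkbT$ and $b_T=\frac1T\int_0^T\mkbt(X(t)\neq x^*(t))\,dt$, the sum $1-\sum_x p_x^2$ is split at $x=x^*(t)$ and the bound $p_x\le p^*$ on the remaining terms yields $b_T\le a_T\le 2b_T$; the conclusion then follows from Corollaries~\ref{cor2.1} and~\ref{th4} exactly as in your argument. Your inequality $\IkbT\le A_T\le\sqrt{\IkbT}$ agrees with the paper's on the left and is actually a touch sharper on the right (since $\sqrt{I}\le(1+I)/2$), but both collapse to the same constants $\alpha^2/4$ and $\alpha^2/2$ as $I\to 1$. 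Part~(ii) is handled identically in the paper, replacing $\mkbt,\,x^*$ by $\mkbT,\,y^*_T$.

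You have correctly located the soft spot: the lower bound in \eqref{eq:pathloc2} requires $1-\widetilde J_{\kappa,\beta,\infty}\ge(1-\e)\,\alpha^2/(2\ln(\beta^2/\kappa))$, which Corollary~\ref{th4}(ii) does not provide. The paper's proof is no more explicit than yours at this point---it simply cites Corollary~\ref{th4}(ii)---so the gap you flag is present in the paper as written rather than a defect specific to your route. Your convexity idea does produce two-sided bounds of the right order (for instance, monotonicity of $r\mapsto r(1-\widetilde J_{\kappa,r,\infty})$ gives $\Psi(\kappa,\beta)-\Psi(\kappa,\beta/2)\le\tfrac{\beta^2}{2}(1-\widetilde J_{\kappa,\beta,\infty})$), but pinning the constant to $\alpha^2/2$ would require differentiating the asymptotic \eqref{eq:huit}, which needs more than the pointwise statement.

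One caution: your side remark that $\widetilde J_{\kappa,\beta,\infty}\ge\widetilde I_{\kappa,\beta,\infty}$ is ``checked directly from \eqref{eq:valI} and \eqref{eq:defJ82}'' amounts to $\beta\,\partial_\beta\Psi\le 2\Psi$, i.e.\ to $\beta\mapsto\Psi(\kappa,\beta)/\beta^2$ being nonincreasing. Convexity of $\Psi$ with $\Psi(\kappa,0)=0$ gives the \emph{opposite} chord inequality $\Psi\le\beta\,\partial_\beta\Psi$, so this is not a direct check; via the scaling \eqref{eq:scalingPsiseul} it is equivalent to $\kappa\mapsto\Psi(\kappa,1)$ being nondecreasing, which is plausible but not established in the paper.
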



In words, the proportion of time the polymer sticks to the favourite path is positive in the long run if and only if the 
difference between  annealed and quenched free energies is strictly increasing with $|\beta|$. Moreover, this proportion tends to 1 as the ratio $\beta^2/\kappa$ becomes large. 
In the case (\ref{eq:pathloc1}) we say that {\bf path localization} holds at $(\kappa, \beta)$.  
In the case of (\ref{eq:pathloc2}), precisely when  
 the overlap of the polymer with the favourite path tends to 1, we say that  {\bf complete path localization} takes place as $\beta^2/\kappa \to \8$.

Very little can be said so far on the favourite path $y^*_T$, which determines the corridor where most of the mass is concentrated. Both $x^*$ and $y^*_T$ are complicated functions of the environment. However, from the above theorem, it is approximated, in the distance ${\rm dist}_T(x,y)= T^{-1} \int_0^T \delta_0(x(t)-y(t))dt$, within order of $ 1/\ln^2(\beta^2/\kappa))$ accuracy, by a path $\gamma$ from $\mathcal{D}_T$, with jump density $T^{-1} N(T,\gamma) \in (r_{\rm max}(\kappa,\be) -\epsilon, r_{\rm max}(\kappa,\be) +\epsilon)$ by Theorem \ref{th:ldp}.
Since 
$$
\kappa^{-1}r_{\rm max}(\kappa,\be) \sim
 \frac{\alpha^2 (\be^2/\kappa)}{4 \ln^2(\be^2/\kappa)},
$$
the path $y^*_T$ becomes wilder as $\beta^2/\kappa$ increases, but within certain limits.

We end with a conjecture, which holds for polymer models on trees.
\begin{conj} \qquad 
\qquad   $ \NC_\kappa = \big\{ \beta: |\beta| \geq (\kappa \Upsilon_c)^{1/2}\big\}$. 
\end{conj}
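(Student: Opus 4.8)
First I would reduce to $\kappa=1$. By the scaling relation (\ref{eq:scalingPsiseul}) one has $\tfrac{\beta^2}{2}-\Psi(\kappa,\beta)=\kappa\big(\tfrac{b^2}{2}-\Psi(1,b)\big)$ with $b=\kappa^{-1/2}\beta$, and by definition $\NC_\kappa=\kappa^{1/2}\NC_1$, so the conjecture is equivalent to $\NC_1=\{b:\,|b|\ge\Upsilon_c^{1/2}\}$. Write $\phi(b)=\tfrac{b^2}{2}-\Psi(1,b)$, an even, continuous function that is non-decreasing in $|b|$ by (\ref{eq:defJ82}). The annealed bound of Remark \ref{rem:21} gives $\phi\ge0$, and (\ref{eq:gamma_c}) with $\kappa=1$ says $\phi(b)=0\iff|b|\le\Upsilon_c^{1/2}$, hence $\phi(b)>0\iff|b|>\Upsilon_c^{1/2}$. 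Thus on $\{|b|<\Upsilon_c^{1/2}\}$ the function is locally constant (equal to $0$), so no such point lies in $\NC_1$; while at $b=\Upsilon_c^{1/2}$ the value is $0$ but every right neighborhood carries strictly positive values, so $\phi$ is non-constant around $\pm\Upsilon_c^{1/2}$ and these belong to $\NC_1$. This disposes of the inclusion $\NC_1\subseteq\{|b|\ge\Upsilon_c^{1/2}\}$ and of the boundary, and is consistent with $\NC_1$ being closed (its complement, the locally-constant set, is open).

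The remaining content is the reverse inclusion $\{|b|>\Upsilon_c^{1/2}\}\subseteq\NC_1$, i.e. that $\phi$ has \emph{no} flat piece on the open supercritical set. Where $\Psi(1,\cdot)$ is differentiable this is equivalent, via (\ref{eq:defJ82}) and the identity $\phi'(b)=b\,\widetilde{J}_{1,b,\infty}$, to strict positivity of the overlap $\widetilde{J}_{1,b,\infty}>0$ throughout strong disorder; a flat interval of $\phi$ would be an interval on which two independent replicas asymptotically never meet. I would rule this out by one of two routes: (a) a Guerra--Toninelli style interpolation \cite{GuTo02}, differentiating the free energy along a smart path so as to identify $\phi'$ with an overlap one can bound below, which would reduce matters to proving that $\widetilde{J}_{1,b,\infty}$ is monotone in $|b|$ (so that, being positive just above $\Upsilon_c^{1/2}$ by Corollary \ref{cor2.1}, it never returns to $0$); or (b) establishing real-analyticity of $b\mapsto\Psi(1,b)$ on the supercritical phase, which forces the zero set of $\phi'$ to be discrete and hence precludes any flat interval.

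On a tree the analyticity route is transparent and yields the conjecture, which is why I would use it as a template. There the partition function obeys an exact recursive distributional equation; the normalizing martingale degenerates precisely when $\beta^2/\kappa>\Upsilon_c$, and in that regime the free energy freezes to an explicit, real-analytic function whose gap $\phi$ is strictly convex and strictly increasing. Differentiating the fixed-point relation shows $\widetilde{J}>0$ for every supercritical $\beta$, so $\phi$ is strictly increasing on $[\Upsilon_c^{1/2},\infty)$ and $\NC_\kappa$ is exactly the claimed half-line. My proposal is to transport the underlying ``monotonicity of the overlap'' mechanism to $\ZZ^d$.

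The hard part will be exactly this transport. On $\ZZ^d$ there is no solvable recursion, and neither strict convexity nor analyticity of the quenched free energy in the interior of the strong disorder phase is currently known; the replica structure that makes the tree argument work is absent. Ruling out a reentrant flat plateau of $\phi$ on $(\Upsilon_c^{1/2},\infty)$---equivalently, showing that $\widetilde{J}_{\kappa,\beta,\infty}$ cannot fall back to $0$ once positive---is therefore the crux, and it is tied to the open regularity theory of the quenched free energy. This is why the statement is offered as a conjecture rather than proved.
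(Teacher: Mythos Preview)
The statement is a \emph{conjecture} in the paper and the paper gives no proof of it; the only remark is that it ``holds for polymer models on trees.'' Your proposal is therefore not to be compared against any argument in the paper --- there is none --- and you correctly recognize this at the end.

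Your reduction to $\kappa=1$ and the analysis of the easy direction are sound: writing $\phi(b)=\tfrac{b^2}{2}-\Psi(1,b)$, the equivalence $\phi(b)=0\iff |b|\le\Upsilon_c^{1/2}$ from (\ref{eq:gamma_c}) immediately gives $\NC_1\subseteq\{|b|\ge\Upsilon_c^{1/2}\}$ and shows that the boundary points $\pm\Upsilon_c^{1/2}$ lie in $\NC_1$ (when $\Upsilon_c>0$). Your identity $\phi'(b)=b\,\widetilde{J}_{1,b,\infty}$ on $\calD_1$ is also correct and makes the remaining content --- no flat piece of $\phi$ on $\{|b|>\Upsilon_c^{1/2}\}$ --- equivalent to $\widetilde{J}_{1,b,\infty}>0$ throughout strong disorder. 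This is exactly the substance of the conjecture, and as you say it is open on $\ZZ^d$: neither analyticity of $\Psi(1,\cdot)$ in the supercritical phase nor monotonicity of $\widetilde{J}$ in $|\beta|$ is known. Your sketch of the tree case matches the paper's one-line justification for stating the conjecture.

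In short: there is no proof in the paper to compare with, and your proposal is an accurate dissection of what is easy, what is hard, and why the hard part is genuinely open.
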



\subsection{Asymptotics of Lyapunov exponents for parabolic Anderson model}
The existence of the quenched  Lyapunov exponent
$\Pski$ is well-known in parabolic Anderson model
literature, see for example  \cite{CaTiVi08}, \cite{CKM}, \cite{CaMo94} and \cite{CMS}. In \cite{CKM} and  \cite{CMS} it was shown that
\begin{eqnarray}\label{lambda}
\Pski\sim\frac{\alpha^2}{4\ln \frac1\kappa},\,\,\kappa\searrow 0,
\end{eqnarray}
where $\sim$ means that the ratio of the two sides tends to one.
A few words about the constant $\alpha$ are in order. Define the space of paths 
\[\Gamma_{[0,n],n}=\left\{\gamma\in \mathcal{D}_n:\,\gamma:[0,n]\to{\mathbb Z}^d,
\,N(\gamma,n)=n\right\}.\]
The superadditive functional 
\[A_{n}=\sup_{\gamma\in\Gamma_{[0,n],n}}H_n(\gamma)\]
is the supremum of a Gaussian field 
\[\left\{H_n(\gamma):{\gamma\in\Gamma_{[0,n],n}}\right\}\]
indexed by the set $\Gamma_{[0,n],n}.$ This set has a suitably bounded entropy, which  allows the conclusion, see \cite{CKM} and \cite{CMS},  that there is a finite, positive constant $\alpha$ such that
\[\lim_{n\to\infty}\frac1nA_{n}=\alpha,\,\,a.s.\]
This is the constant $\alpha$ appearing in (\ref{lambda}).
Thus, by the scaling relation (\ref{eq:scalingPsiseul}),
it follows that
\begin{eqnarray} \label{eq:huit}
\begin{split}
\Pskb 
\sim \frac{\alpha^2\beta^2}{4\ln(\beta^2/\kappa)},\,\,\, \beta^2/\kappa \to \infty
\end{split}
\end{eqnarray}
In particular, for $k>0$,
$$
\lim_{\beta \to \8} \Psi(\beta^2e^{-k\beta^2},\beta) = \frac{\alpha^2}{4k}.
$$
We give a streamlined approach to (\ref{lambda}) and  (\ref{eq:huit}).
Obviously, since $\textup{L}^p$ norms approach the $\textup{L}^\infty$ norm as $p\to\8,$
\begin{align*}
\lim_{\be \to \8}  \frac{1}{\beta T} 
\ln E_\kappa  \big[ \exp\{\be H_T(X)\}| N(T,X)=[rT] \big]
&=\sup_{\gamma\in\Gamma_{[0,T],[rT]}}H_T(\gamma)\\
&\equiv A_{T,r}
\end{align*}
By taking the limit as $T \to \8$ and 
interchanging the limits $T \to \8$ and $\be \to \8,$ a step to be justified later,
see (\ref{eq:stchely3}), we get 
\begin{equation} \label{eq:stchely2}
\lim_{\be \to \8}  \beta^{-1} \Gkbr = \lim_{T \to \8} 
T^{-1}A_{T,r}.
\end{equation}
Now, by the Brownian scaling, $ A_{T,r} \stackrel{\mathcal{L}}{=} \sqrt{r} A_T,$ making the 
previous limit equal to $\alpha \sqrt{r}$.
Let $r_{\rm max}(\kappa,\be)$ be the set of maximizers of the right-hand side
of (\ref{eq:valfreeenergy}). This set is a non-empty,
compact interval included in $\IR_+^*$, and we conjecture it reduces to a 
single point. Observe by scaling that
$$ r_{\rm max}(\kappa,\be)= \kappa r_{\rm max}( 1,\kappa^{-1/2}\be).$$
Parts $(i)$ and $(ii)$ of the following proposition were established in \cite{CKM}, \cite{CMS} and \cite{MRT08}.
\begin{prop} \label{th:CMS} \textup{(i)} Almost surely,
\begin{equation*}
\lim_{\be \to \8}  \beta^{-1} \Gkbr= \alpha \sqrt{r}
\end{equation*}
locally uniformly for  $r \in (0, +\8)$. \\
\textup{(ii)} As $\be^2/\kappa \to \8$, 
\begin{align*}
\Pskb \sim&\;
\kappa \sup \left\{ \alpha \be \sqrt{\frac{r}{\kappa}}
-\textup{I}_1(r) :\,r \geq 0\right\} \\
\sim&\; \frac{\alpha^2\beta^2}{4\ln(\beta^2/\kappa)}\;.
\end{align*}
\textup{(iii)} As $\be^2/\kappa \to \8$, 
\begin{equation*}
r_{\rm max}(\kappa,\be) \sim 
\kappa \times \frac{\alpha^2 (\be^2/\kappa)}{4 \ln^2(\be^2/\kappa)}\;.
\end{equation*}
\bigskip
\end{prop}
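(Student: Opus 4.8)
The plan is to take the three parts in increasing order of novelty: (i) links the conditioned partition function to the zero--temperature constant $\alpha$, (ii) follows by feeding that asymptotic into the variational formula (\ref{eq:valfreeenergy}), and (iii), locating the optimizer, is the genuinely new point. For (i) I would start from the identity displayed just before (\ref{eq:stchely2}): at fixed $T$ the Laplace ($\textup{L}^p\to\textup{L}^\infty$) principle gives $\be^{-1}\ln \Ek[\exp\{\be H_T(X)\}\,|\,N(T,X)=[rT]]\to A_{T,r}=\sup_{\gamma\in\Gamma_{[0,T],[rT]}}H_T(\gamma)$ as $\be\to\8$, the essential supremum of the Gaussian field $H_T$ over the (a.s. bounded) support of the conditioned law; superadditivity of $T\mapsto A_{T,r}$ and Brownian scaling $A_{T,r}\stackrel{\mcl}{=}\sqrt r\,A_T$ then give $T^{-1}A_{T,r}\to\alpha\sqrt r$ a.s. The one subtle point is the interchange of $\be\to\8$ and $T\to\8$ flagged at (\ref{eq:stchely2}). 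I would resolve it by monotonicity: with $g_T(\be)=(\be T)^{-1}\ln \Ek[\exp\{\be H_T(X)\}\,|\,N=[rT]]$, the inner cumulant generating function is convex and vanishes at $\be=0$, so $g_T$ is nondecreasing in $\be>0$; letting $T\to\8$ shows $\be^{-1}\Gkbr$ is nondecreasing in $\be$ and, from $g_T(\be)\le T^{-1}A_{T,r}$, bounded above by $\alpha\sqrt r$. The matching lower bound is the delicate direction, handled by restricting the conditional expectation to a neighborhood of a near-optimal path (or by citing \cite{CKM},\cite{CMS}). Local uniformity in $r$ is then automatic, since (\ref{eq:scalingGmseul}) yields the exact identity $\be^{-1}\Gkbr=\sqrt r\,\phi(\be/\sqrt r)$ with $\phi(u)=u^{-1}\Gm(u,1)$, so everything reduces to the single statement $\phi(u)\to\alpha$.

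For (ii), I would reduce to $\kappa=1$ via (\ref{eq:scalingPsiseul}) and substitute $\Gm(\tilde\be,r)\sim\alpha\tilde\be\sqrt r$ into $\Psi(1,\tilde\be)=\sup_{r\ge0}\{\Gm(\tilde\be,r)-\textup{I}_1(r)\}$. The step needing care is that the relevant $r$ grows with $\tilde\be$, so local uniformity does not suffice; instead I would use the exact identity $\Gm(\be,r)=\be\sqrt r\,\phi(\be/\sqrt r)$ together with $\phi\to\alpha$, which gives $\Gm(\be,r)\sim\alpha\be\sqrt r$ \emph{uniformly} on every window $r=o(\be^2)$, and combine this with the a priori localization of the optimizer in that window (below). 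The remainder is the elementary Laplace asymptotics of $\sup_r\{\alpha\tilde\be\sqrt r-\textup{I}_1(r)\}$: its maximizer solves $\alpha\tilde\be/(2\sqrt r)=\ln r$, giving $\hat r\sim\alpha^2\tilde\be^2/(16\ln^2\tilde\be)$ and optimal value $\hat r\ln\hat r+\hat r-1\sim\alpha^2\tilde\be^2/(4\ln\tilde\be^2)$; unscaling recovers (\ref{eq:huit}).

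Part (iii) is the main target. After reducing to $\kappa=1$ through $r_{\rm max}(\kappa,\be)=\kappa\,r_{\rm max}(1,\kappa^{-1/2}\be)$, I would first localize every maximizer $r_\be$ of $G(r)=\Gm(\be,r)-\textup{I}_1(r)$. The annealed bound $\Gm(\be,r)\le\be^2/2$ together with $\textup{I}_1(r)\sim r\ln r$ forces $G(r)<0$ for $r\ge\delta\be^2$ (any fixed $\delta>0$, large $\be$), while part (i) makes $G$ only $O(\be)\ll\Pskb$ for bounded $r$; hence $1\ll r_\be=o(\be^2)$. On that window the uniform approximation $\Gm\sim\alpha\be\sqrt r$ applies, so $G$ is close, relative to its scale, to the explicit concave $\hat G(r)=\alpha\be\sqrt r-\textup{I}_1(r)$ with maximizer $\hat r_\be\sim\alpha^2\be^2/(16\ln^2\be)$. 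Comparing $\hat G(C\hat r_\be)\sim(2\sqrt C-C)\hat r_\be\ln\hat r_\be$ against the error $\textup{O}(\e\,\alpha\be\sqrt r)$ first pins $r_\be\asymp\hat r_\be$; then, writing $G=\hat G+E$ with $|E|\le\e\,\alpha\be\sqrt r$ and using $G(r_\be)\ge G(\hat r_\be)$, one gets $\hat G(\hat r_\be)-\hat G(r_\be)\lesssim\e\,\alpha\be\sqrt{\hat r_\be}$, and the curvature $\hat G''(\hat r_\be)\sim-\ln\hat r_\be/(2\hat r_\be)$ upgrades this to $|r_\be-\hat r_\be|/\hat r_\be\lesssim\sqrt\e$, i.e. $r_\be\sim\hat r_\be$. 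Unscaling yields (iii), and because the bound holds for every maximizer, the whole set $r_{\rm max}$ is asymptotic to the stated value.

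The hardest part is this quantitative argmax step: the objective is extremely flat at its peak—its second derivative is only of order $\ln\hat r_\be/\hat r_\be$—so fixing the location to leading order demands genuinely uniform, not merely pointwise, control of $\Gm(\be,r)-\alpha\be\sqrt r$ over the growing window $r\asymp\be^2/\ln^2\be$, which is precisely what the scaling identity $\Gm(\be,r)=\be\sqrt r\,\phi(\be/\sqrt r)$ supplies. The secondary obstacle is the lower bound in the limit interchange of part (i).
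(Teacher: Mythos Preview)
Your outline is correct and, for parts (i) and (ii), matches the paper closely: both proofs reduce via the scaling identity $\be^{-1}\Gkbr=\sqrt r\,\phi(\be/\sqrt r)$ with $\phi(u)=u^{-1}\Gm(u,1)$, bound $\phi\le\alpha$ from $\Ek[e^{\be H_T}\mid N=T]\le e^{\be A_T}$, and get the lower bound by restricting to a tube around a near-optimal path (the paper explicitly carries out this restriction, citing \cite{CMS}). Your monotonicity remark on $\phi$ is a nice addition the paper does not make explicit. For (ii) the paper is slightly more economical than you: the upper bound uses the \emph{exact} inequality $\Gm(\be,r)\le\alpha\be\sqrt r$ (valid for all $\be,r$, not merely asymptotically), so no uniformity issue arises there, and the lower bound simply evaluates $\Gkbr-\textup I_\kappa(r)$ at the explicit maximizer $r_m$ of $\alpha\be\sqrt r-\textup I_\kappa(r)$, noting $r_m^{-1/2}\be\to\8$ so that $\phi(r_m^{-1/2}\be)\to\alpha$ pointwise suffices.

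The genuine divergence is in (iii). You run a quantitative argmax-stability argument: localize $r_\be$ to $1\ll r_\be=o(\be^2)$, approximate $G$ by the explicit $\hat G$ on that window, and use the curvature $\hat G''(\hat r_\be)\sim-\ln\hat r_\be/(2\hat r_\be)$ to transfer closeness of values into closeness of maximizers. This works, but it is more labor than needed. The paper instead exploits the exact one-sided bound $\Gm(\be,r)\le\alpha\be\sqrt r$ once more: for any fixed $\epsilon>0$,
\[
\sup_{|r-r_m|\ge\epsilon r_m}\big\{\Gm(\be,r)-\textup I_\kappa(r)\big\}
\ \le\ \sup_{|r-r_m|\ge\epsilon r_m}\big\{\alpha\be\sqrt r-\textup I_\kappa(r)\big\}
\ \le\ (1-\delta)\,\sup_{r\ge0}\big\{\alpha\be\sqrt r-\textup I_\kappa(r)\big\}
\ \le\ (1-\delta')\,\Pskb,
\]
the middle step being an elementary computation for the explicit function (its value at $r_m(1\pm\epsilon)$ is asymptotic to $(2\sqrt{1\pm\epsilon}-(1\pm\epsilon))$ times the maximum), and the last step using (ii). This pins every maximizer into $(r_m(1-\epsilon),r_m(1+\epsilon))$ without any curvature analysis or two-sided uniform control of $\Gm$. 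What your approach buys is robustness: it would still work if one only had $\Gm(\be,r)=\alpha\be\sqrt r\,(1+o(1))$ rather than the exact inequality. What the paper's approach buys is brevity---the hardest step you identify (uniform control over the growing window) simply disappears.
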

Recall $r_{\rm max}$ is defined as an interval, the last statement means that both endpoints are equivalent 
to the right-hand side.
The behavior in (iii) for the typical number of jumps ($r_{\rm max}(\kappa,\be) >\!> \kappa$) is drastically different from that in the weak disorder regime
in Remark \ref{rem:21}, (i). 
%
%
%
%
%
\section{Preliminaries from Malliavin calculus.} \label{sec:prel}
Express minus the Hamiltonian for a fixed path $X$ as
\[H_T(X)=\Smx\iWx.\]
This has the form 
\[W(h)=\Smx \int_0^T 
h(t,x)dW_x(t),\]
 where $h(t,x)$ depends on $X$ by the relation $h(t,x)=\delta_x(X(t)).$ Obviously, $h\in \textup{L}^2([0,T]\times{{\mathbb Z}^d}).$ The family $\{W(h): h \in  \textup{L}^2([0,T]\times{{\mathbb Z}^d})\}$ is called a centered, isonormal Gaussian family, and defines an abstract Wiener space as in  \cite{Nua06} or \cite{UstZa00}. The Malliavin derivative $DF$ of a square integrable random variable $F$ defined on this space is, when it exists, a random element of $ \textup{L}^2([0,T]\times{{\mathbb Z}^d})$, that we will view as a stochastic process $DF=(D_{t,x}F)_{t,x}$ indexed by time and space. The Malliavin derivative $D_{t,x}$  is heuristically equal to $\frac{\partial}{\partial (dW_x(t))}$ and can be formally computed as such.  
The Malliavin derivative of $H_T(X)$ is thus the element of $ \textup{L}^2([0,T]\times{{\mathbb Z}^d})$ defined by

\[D_{t,x}H_T(X)=\delta_x(X(t)).\]
Then taking $f(y)=e^y$ and applying the chain rule, we find the Malliavin derivative of $f(\beta H_T(X))$ is given by
\[D_{t,x}f(\beta H_T(X))=\beta f(\beta H_T(X))\delta_x(X(t)). \] 
Taking the average over paths $X$ and then differentiating yields
\[D_{t,x}\ZkbT=\beta E_\kappa[\delta_x(X(t))\ebT].\]
Note that we need to invoke not only linearity but also continuity to get this identity. 
Using again the chain rule, we obtain
\begin{eqnarray}
\begin{split}
D_{t,x}\ln \ZkbT=\beta\mkbT(\delta_x(X(t))).
\end{split}
\end{eqnarray}
The crucial point is that
\begin{align}
\begin{split}
  \label{eq:L2normderivative}
 \| D \ln \ZkbT\|^2_{L^2([0,T]\times{\mathbb Z}^d)}&=
\beta^2 \int_0^T\mkbT^{\otimes 2}(X(t)=\widetilde{X}(t))dt \\
&= \beta^2 T \, \JkbT\\
&\leq \beta^2 T.
\end{split}
\end{align}
The  integration by parts formula
\begin{equation}
\label{eq:ibpM}
E[W(h)F] = E \big[ \sum_x \int_0^T h \; D_{t,x} F \; dt  \big],
\end{equation}
is the central tool in Malliavin calculus, see \cite{Nua06}.

\begin{lemma}
\begin{equation}
\label{ibp}
\frac{\partial}{\partial \beta}E[\ln \ZkbT]
=\beta T\big[1-E[\JkbT]\big].
\end{equation}
\end{lemma}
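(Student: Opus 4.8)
The plan is to start by differentiating the free energy directly. Granting that one may differentiate under the expectation in $\beta$ (justified below), we get
\[
\frac{\partial}{\partial \beta}E[\ln \ZkbT] = E\!\left[\frac{1}{\ZkbT}\frac{\partial \ZkbT}{\partial \beta}\right] = E\big[\mkbT(H_T(X))\big],
\]
since $\partial_\beta \ZkbT = E_\kappa[H_T(X)\ebT]$. The key structural observation is that, for a \emph{fixed} path $X$, the Hamiltonian $H_T(X)=\sum_x\int_0^T \delta_x(X(t))\,dW_x(t)$ is exactly a first-chaos element $W(h_X)$ of the isonormal family, with $h_X(t,x)=\delta_x(X(t))\in \textup{L}^2([0,T]\times{\mathbb Z}^d)$. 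I would then use Fubini to pull the quenched path average outside the environment expectation, writing $E[\mkbT(H_T(X))]=E_\kappa^X\, E[\,W(h_X)\,G\,]$ with $G=\ebT/\ZkbT$, so that the inner expectation is of the form to which the Gaussian integration by parts \eqref{eq:ibpM} applies.

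Next I would compute the Malliavin derivative of $G$ for fixed $X$, using the quotient and chain rules together with $D_{t,x}\ebT=\beta\ebT\delta_x(X(t))$ and $D_{t,x}\ZkbT=\beta E_\kappa[\delta_x(\widetilde{X}(t))e^{\beta H_T(\widetilde{X})}]$ (writing $\widetilde{X}$ for the dummy path in the partition function to avoid collision with the fixed $X$). This gives
\[
D_{t,x}G = \beta\,\frac{\ebT}{\ZkbT}\Big[\delta_x(X(t))-\mkbT\big(\delta_x(\widetilde{X}(t))\big)\Big].
\]
Applying \eqref{eq:ibpM}, i.e. contracting $D_{t,x}G$ against $h_X(t,x)=\delta_x(X(t))$ and summing over $x$ and integrating in $t$, the diagonal term uses $\sum_x\delta_x(X(t))^2=1$ to produce $\beta\,(\ebT/\ZkbT)\,T$, while the cross term uses $\sum_x\delta_x(X(t))\,\mkbT(\delta_x(\widetilde{X}(t)))=\mkbT(\widetilde{X}(t)=X(t))$.

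To finish, I would take the outer path average $E_\kappa^X$ and the environment expectation $E$ and simplify each piece. The diagonal contribution gives $\beta T\,E\big[E_\kappa^X(\ebT/\ZkbT)\big]=\beta T$, since the reweighting integrates to $1$. In the cross contribution, the factor $\ebT/\ZkbT$ turns $E_\kappa^X$ into a Gibbs average, so for each $t$ one gets $\mkbT^{\otimes 2}(X(t)=\widetilde{X}(t))$; integrating in $t$ and recognizing the definition of the overlap yields $\beta\,E[T\,\JkbT]=\beta T\,E[\JkbT]$. Combining the two gives $\beta T\big(1-E[\JkbT]\big)$, which is \eqref{ibp}.

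The main obstacle is not the algebra but the analytic justifications: differentiating under the expectation in $\beta$, the Fubini interchange between the environment and path measures, and above all the verification that $G=\ebT/\ZkbT$ lies in the domain of $D$ so that \eqref{eq:ibpM} is legitimate. The numerator is log-normal and hence has moments of all orders, but $\ZkbT^{-1}$ requires control of the negative moments of the partition function; I expect to lean on the $\textup{L}^p$ estimates for $\ZkbT=u(T,0)$ imported from \cite{CMS}, together with the a priori bound $\|D\ln\ZkbT\|_{L^2}^2\le \beta^2 T$ from \eqref{eq:L2normderivative}, to close these integrability gaps.
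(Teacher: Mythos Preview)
Your proposal is correct and follows essentially the same approach as the paper: differentiate under the expectation to get $E[\mkbT(H_T(X))]$, then apply Gaussian integration by parts (\ref{eq:ibpM}) to the quotient $G=\ebT/\ZkbT$, compute $D_{t,x}G$ via the chain and quotient rules, and simplify the diagonal and cross terms exactly as you describe. The paper's proof carries out the same computation in a slightly more pedestrian way (writing out the sum over $x$ and integral over $t$ explicitly rather than packaging things as $W(h_X)$), and is less explicit than you are about the analytic justifications; your attention to the domain issue for $G$ and the role of the \cite{CMS} estimates is a welcome addition.
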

\begin{proof} 
Differentiating inside the integral, we obtain
$$
\frac{\partial}{\partial \beta}E[\ln \ZkbT]= E[\mkbT(H_T(X))].
$$
Then, we write
\begin{eqnarray} \nonumber
\begin{split}
E\left[\mkbT(H_T(X))\right]=&E\left[\mkbT\left(\Ht\right)\right]\\
=&\Smx EE_\kappa\left[\fez \iWx\right]\\
=&\Smx \int_0^T E_\kappa\left[E\left[\fez dW_x(t)\right]\delta_x(X(t))\right]\\
=&\Smx \int_0^T E_\kappa\left[E\left[D_{t,x}\fez \right]\delta_x(X(t))\right]dt
\end{split}
\end{eqnarray}
from Gaussian integration by parts, see \cite{Nua06}. (A less pedestrian -- though equivalent --
computation is to apply directly the formula (\ref{eq:ibpM})). Then,
\begin{eqnarray*}
\begin{split}
&E\left[\mkbT(H_T(X))\right]\\
\qquad
&=\Smx \int_0^T E_\kappa\left[E\left[\left(\beta D_{t,x} H_T(X)\fez -\fez \frac{E_\kappa[D_{t,x} H_T(\tilde X) \, e^{\beta H_T(\tilde X}]}{\ZkbT}\right)\right]\delta_x(X(t))\right]dt\\
\qquad&=\beta\Smx \int_0^T E_\kappa\left[E\left[\left(\delta_x(X(t))\fez -\fez\frac{E_\kappa[\delta_x(\tilde X(t)) e^{\beta H_T(\tilde X)}]}{\ZkbT}\right)\right]\delta_x(X(t))\right]dt\\
\qquad&=\beta\Smx \int_0^T E\left[\mkbT(\delta_x(X(t))) -\mkbT(\delta_x(X(t)))^2 \right]dt\\
\qquad&=\beta E\left[\int_0^T \left(1-\mkbT^{\otimes 2}(X(t)=\widetilde{X}(t))\right)dt\right]\\
\qquad&=\beta T\left[1-E[\JkbT]\right].
\end{split}
\end{eqnarray*}
\end{proof}
We will use the concentration of measure phenomenon in our analysis. The use of Malliavin calculus for concentration appeared in 
\cite{RoTi05} in the study of polymers, and earlier in \cite{To98} in  the study of mean-field disordered systems.
\begin{lemma}[Concentration]
  \label{lem:concentration}
Let $A$ be a Borel subset of the path space with $P_x^\kappa(A)>0$, and let 
$$\ZkbT(A)= E_\kappa\left[ \exp\{\beta H_T(X)\}; A\right].$$ Then, for all $u>0$,
$$
Q\left( |\ln \ZkbT(A) - E \ln \ZkbT(A) | \geq u \right)
\leq 2 \exp\left\{ -\frac{u^2}{2 \beta^2 T}\right\}.
$$
\end{lemma}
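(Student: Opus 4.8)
The plan is to show that the functional $F({\mathcal W}) = \ln \ZkbT(A)$ is Lipschitz as a function of the Gaussian environment, with Lipschitz constant controlled by $\beta\sqrt{T}$, and then to invoke the standard Gaussian concentration inequality for Lipschitz functionals of an isonormal Gaussian family. The key quantitative input is precisely the $L^2$-norm bound on the Malliavin derivative already displayed in (\ref{eq:L2normderivative}): for the full partition function one has $\|D\ln\ZkbT\|^2_{L^2} = \beta^2 T\, \JkbT \leq \beta^2 T$. The first step is therefore to carry out the same Malliavin computation with $A$ inserted, i.e.\ for $\ZkbT(A) = E_\kappa[\exp\{\beta H_T(X)\}; A]$, and check that the analogous bound survives.

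First I would compute $D_{t,x}\ln\ZkbT(A)$. Repeating the chain-rule argument that led to $D_{t,x}\ln\ZkbT = \beta\,\mkbT(\delta_x(X(t)))$, but restricting every path expectation to the event $A$, one obtains
\begin{equation*}
D_{t,x}\ln\ZkbT(A) = \beta\,\mu^A_{\kappa,\beta,T}\big(\delta_x(X(t))\big),
\end{equation*}
where $\mu^A_{\kappa,\beta,T}$ denotes the polymer measure conditioned to $A$ (i.e.\ with partition function $\ZkbT(A)$). Squaring and summing over $x$ and integrating in $t$ exactly as in (\ref{eq:L2normderivative}) gives
\begin{equation*}
\|D\ln\ZkbT(A)\|^2_{L^2([0,T]\times{\mathbb Z}^d)} = \beta^2\int_0^T \big(\mu^A_{\kappa,\beta,T}\big)^{\otimes 2}\big(X(t)=\widetilde X(t)\big)\,dt \leq \beta^2 T,
\end{equation*}
the final inequality holding because for each fixed $t$ the quantity $\sum_x \mu^A(\delta_x(X(t)))^2 = (\mu^A)^{\otimes 2}(X(t)=\widetilde X(t))$ is a probability, hence at most $1$. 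Thus $F = \ln\ZkbT(A)$ has Malliavin derivative bounded in $L^2$-norm by $\beta\sqrt T$, uniformly in the environment.

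The final step is to feed this into Gaussian concentration. For a functional $F$ on the abstract Wiener space with $\|DF\|_{L^2} \leq L$ almost surely, one has the Gaussian tail bound $Q(|F - EF| \geq u) \leq 2\exp\{-u^2/(2L^2)\}$; with $L^2 = \beta^2 T$ this is exactly the claimed estimate. I would cite \cite{Nua06} for this inequality, as the paper already points to that reference for the Malliavin apparatus. The main technical obstacle I anticipate is not the concentration inequality itself but the justification that $F$ lies in the domain of $D$ and that the formal chain-rule differentiation is valid when the path expectation is truncated to $A$: one must check that $\ZkbT(A)$ is almost surely strictly positive (which follows from $P_x^\kappa(A)>0$ together with finiteness of the partition function) so that $\ln\ZkbT(A)$ is well defined, and that the interchange of Malliavin differentiation with the path-expectation $E_\kappa[\,\cdot\,;A]$ is licit — exactly the continuity/linearity point already flagged in the derivation of $D_{t,x}\ZkbT$. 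Once these measurability and integrability conditions are in place, the uniform bound $\|DF\|_{L^2}\leq\beta\sqrt T$ and the Gaussian isoperimetric inequality close the argument.
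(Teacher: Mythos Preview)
Your proposal is correct and follows essentially the same route as the paper: compute $D_{t,x}\ln\ZkbT(A)=\beta\,\mkbT(\delta_x(X(t))\mid A)$, bound the $L^2$-norm of the Malliavin derivative by $\beta^2 T$ exactly as in (\ref{eq:L2normderivative}), and then invoke the Gaussian concentration inequality for functionals with bounded Malliavin gradient. The only cosmetic difference is the reference for the final step --- the paper cites Theorem~B.8.1 of \cite{UstZa00} rather than \cite{Nua06}.
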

\begin{proof} Of course,  $ \ZkbT(A)=\ZkbT$ when $A$ is the full space. Following the above computations for  $\ZkbT$, we see that the derivative is equal to
$$D_{t,x}\ln \ZkbT(A)=\beta\mkbT(\delta_x(X(t))|A),$$
and, as in (\ref{eq:L2normderivative}), its norm
$\textup{L}^2([0,T]\times {\ZZ}^d)$ is a.s. bounded by $ \beta^2 T$.
The lemma follows from Theorem B.8.1 in \cite{UstZa00}.
\end{proof}
Next observe from (\ref{ibp}) that
\begin{eqnarray} \nonumber
\Pskb&=&\lim_{T\to\infty}T^{-1}E\left[\ln\ZkbT
\right]\\ \label{eq:tard}
&=&\lim_{T\to\infty}\int_0^\beta r\left[1-E\left[J_{\kappa,r,T}\right]\right]dr.
\end{eqnarray}
Define 
$$\calD_1'=\{\beta \in \IR: \Psi(1,\beta) {\rm \ is\ differentiable\ at\  } \beta \} ,\quad \calD_1 =\calD_1'\setminus \{0\},$$
 and
\begin{eqnarray*}
 \calD_\kappa = \kappa^{1/2} \calD_1.
\end{eqnarray*}
By (\ref{eq:scalingPsiseul}), $\beta \mapsto \Psi(\kappa, \beta)$ is differentiable on $\calD_\kappa$ for all $\kappa$, and by convexity, and the complement of $\calD_\kappa$ is at most countable.
By standard convexity arguments, the derivative $\beta(1-E\left[J_{\kappa,\beta,T}\right])$ of 
$T^{-1}E\left[\ln \ZkbT \right]$ converges on $\calD_\kappa$ to $(\partial/\partial \beta)
\Pskb$.
Hence, the limit 
\begin{eqnarray} 
\label{eq:defJ82a}
\widetilde{J}_{\kappa,\beta,\infty}:=\lim_{T\to\infty}E\left[\JkbT\right] = 
1- \beta^{-1} \frac{\partial}{\partial \beta} \Pskb
\end{eqnarray}
exists for all $\kappa$ and  $\beta \in \calD_\kappa$, with
\begin{eqnarray} 
\label{eq:formint}
\Psi(\kappa, \beta)= \int_0^\beta r
\left[1-\widetilde{J}_{\kappa,r,\infty}\right] dr
\end{eqnarray}
and moreover
\begin{equation}\label{eq:decr}
\beta \mapsto \beta \big[ 1-\widetilde{J}_{\kappa,\beta,\infty}\big] {\rm \; is\ non \ decreasing.}
\end{equation}
\begin{prop} \label{th:Jt}
As $\beta^2/\kappa\to\infty,$
\begin{eqnarray*}
\frac{\alpha^2}{4\ln(\beta^2/\kappa)}(1+o(1))=&\frac{2}{\beta^2}\int_0^\beta r\left[1-\widetilde{J}_{\kappa,r,\infty}\right]dr,
\end{eqnarray*}
and 
\begin{eqnarray*}
\widetilde{J}_{\kappa,\beta,\infty}= 1-{\mathcal O}\left(\frac{1}{\ln(\beta^2/\kappa)}\right).
\end{eqnarray*}
\end{prop}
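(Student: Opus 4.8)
The plan is to read both displays directly off two facts already established: the exact representation (\ref{eq:formint}), namely $\Psi(\kappa,\beta)=\int_0^\beta r\,[1-\widetilde{J}_{\kappa,r,\infty}]\,dr$, and the Lyapunov asymptotics (\ref{eq:huit}), $\Psi(\kappa,\beta)\sim \alpha^2\beta^2/(4\ln(\beta^2/\kappa))$ as $\beta^2/\kappa\to\8$. A preliminary remark is that, by the scaling relation (\ref{eq:scalingPsiseul}) in the form $\Psi(\kappa,\beta)=\kappa\,\Psi(1,\kappa^{-1/2}\beta)$ together with (\ref{eq:defJ82a}), one checks that $\widetilde{J}_{\kappa,\beta,\infty}=\widetilde{J}_{1,\kappa^{-1/2}\beta,\infty}$, so the overlap depends only on $\beta^2/\kappa$; it would therefore suffice to treat $\kappa=1$ with $\beta\to\8$, though I would keep $\kappa$ general for transparency.

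The first display is then immediate: by (\ref{eq:formint}) its right-hand side equals exactly $(2/\beta^2)\Psi(\kappa,\beta)$, so it is nothing but the asymptotics (\ref{eq:huit}) rescaled by the factor $2/\beta^2$. No further work is needed there.

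The substantive point is the second display, because one cannot legitimately differentiate an asymptotic equivalence term by term. Here I would invoke convexity. By (\ref{eq:defJ82a}) we have $1-\widetilde{J}_{\kappa,\beta,\infty}=\beta^{-1}\partial_\beta\Psi(\kappa,\beta)$, and by (\ref{eq:decr}) the function $g(\beta):=\beta\,[1-\widetilde{J}_{\kappa,\beta,\infty}]=\partial_\beta\Psi(\kappa,\beta)$ is non-decreasing (equivalently, $\Psi(\kappa,\cdot)$ is convex). Monotonicity of $g$ gives, for any fixed $c>1$,
\[
\Psi(\kappa,c\beta)-\Psi(\kappa,\beta)=\int_\beta^{c\beta} g(s)\,ds\ \geq\ (c-1)\,\beta\, g(\beta),
\]
whence $g(\beta)\leq [\Psi(\kappa,c\beta)-\Psi(\kappa,\beta)]/((c-1)\beta)$. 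Taking $c=2$ and inserting (\ref{eq:huit}) at both $\beta$ and $2\beta$ — using that $\ln(4\beta^2/\kappa)\sim\ln(\beta^2/\kappa)$, so $\Psi(\kappa,2\beta)\sim \alpha^2\beta^2/\ln(\beta^2/\kappa)$ while $\Psi(\kappa,\beta)\sim \alpha^2\beta^2/(4\ln(\beta^2/\kappa))$ — the difference is of order $\beta^2/\ln(\beta^2/\kappa)$, giving $g(\beta)=\mathcal{O}(\beta/\ln(\beta^2/\kappa))$. Dividing by $\beta$ and combining with the trivial bound $1-\widetilde{J}_{\kappa,\beta,\infty}\geq 0$ yields $\widetilde{J}_{\kappa,\beta,\infty}=1-\mathcal{O}(1/\ln(\beta^2/\kappa))$.

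The one delicate point, and the place I would watch most carefully, is precisely this last estimate: the leading terms of $\Psi(\kappa,2\beta)$ and $\Psi(\kappa,\beta)$ share the order $\beta^2/\ln(\beta^2/\kappa)$, so one must verify that their difference persists at that order rather than collapsing to a lower-order term. It does, because the $\beta^2$ prefactor scales quadratically while the logarithm is essentially invariant under $\beta\mapsto 2\beta$, so the effective coefficients ($1$ versus $1/4$) genuinely differ. For the stated $\mathcal{O}$-bound only this upper estimate is required; a symmetric comparison on the interval $[\beta/2,\beta]$ would, if desired, furnish a matching lower bound $1-\widetilde{J}_{\kappa,\beta,\infty}\gtrsim 1/\ln(\beta^2/\kappa)$ and hence a two-sided estimate.
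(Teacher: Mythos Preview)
Your proof is correct and follows essentially the same route as the paper's. Both derive the first display immediately from (\ref{eq:formint}) and (\ref{eq:huit}), and both obtain the second display from the monotonicity (\ref{eq:decr}) of $g(r)=r[1-\widetilde{J}_{\kappa,r,\infty}]$ by bounding an integral of $g$ from below by (interval length)$\times$(value at the left endpoint); the only cosmetic difference is that the paper integrates over $[\beta/2,\beta]$ and bounds $\int_0^\beta g\ge\int_{\beta/2}^\beta g\ge(\beta^2/4)[1-\widetilde{J}_{\kappa,\beta/2,\infty}]$, then shifts $\beta/2\mapsto\beta$, whereas you integrate over $[\beta,2\beta]$ and bound $\widetilde{J}_{\kappa,\beta,\infty}$ directly.
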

\begin{proof}
The first statement is straightforward from (\ref{eq:huit}) and (\ref{eq:formint}). By (\ref{eq:decr}), we have for $\beta  \geq 0$,
\begin{eqnarray*}
\int_0^\beta r\left[1-\widetilde{J}_{\kappa,r,\infty}\right]dr
 \ge&{\displaystyle \int_\frac{\beta}{2}^\beta} r\left[1-\widetilde{J}_{\kappa,r,\infty}\right]dr\\
\ge&\frac{\beta^2}{4}\left(1-\widetilde{J}_{\kappa,\beta/2,\infty}\right).
\end{eqnarray*}
Then, combining this with the first statement, we obtain
$$\widetilde{J}_{\kappa,\beta/2,\infty}\ge1-\frac{\alpha^2}{2\ln\lt(\frac{\beta^2}{\kappa}\rt)}(1+o(1)),
$$
yielding the same bound for $\widetilde{J}_{\kappa,\beta,\infty}$. This  completes the proof. 
\end{proof}
\section{An It\^o Calculation} \label{sec:ito}
In this section we use the It\^o calculus to obtain results on the overlap. We refer to \cite{CH} for the use of stochastic calculus in the study of the parabolic Anderson model. Recall
\[\Zkbt=\Ek\left[\exp\{\beta H_t(X) \}\right]\]
and that
\[\mkbt(f)=\frac{\Ek\left[f \exp\{\beta H_t(X) \}\right]}{\Zkbt}.\]
Note that
\[d\ln \Zkbt=\frac{1}{\Zkbt}d\Zkbt-\frac{1}{2\Zkbt^2}d\left<\Zkbt\right>.\]
where
\begin{eqnarray}
\begin{split}
d\Zkbt=&d\Ek\left[ \ebt\right]\\
=&\Ek\left[ d\,\exp\{\beta H_t(X) \}\right]\\
=&\Ek\left[
\beta \exp\{\beta H_t (X)\}\,dW_{X(t)}(t)+\frac{\beta^2}{2} 
\exp\{\beta H_t (X)\}\, dt\right]\\
\end{split}
\end{eqnarray}
and therefore,
\[d\left<\Zkbt\right>=\beta^2\Ek^{\otimes 2}\left[1_{X(t)=\widetilde{X}(t)}
\exp\{\beta H_t (X)\}\exp\{\beta H_t (\widetilde{X})\}
\right]\; dt\]
with $\tilde{X}$ an independent copy of $X$. 
Thus,
\[d\ln\,\Zkbt=\beta \mkbt(dW_{X(t)}(t))+\frac{\beta^2}{2}\left(1-\mkbt^{\otimes2}(X(t)=\widetilde{X}(t))\right)dt\]
and upon integration we get
\begin{equation}
\label{eq:1405}
\frac1t \,\ln\,\Zkbt=\frac1t M_t+  \frac{\beta^2}{2}\left(1-\frac{1}{t}\,\int_0^t\mkbs^{\otimes2}(X(s)=\widetilde{X}(s)) ds\right),        
\end{equation}•
where $M_t$ is a square-integrable martingale with quadratic variation 
$\left<M\right>$ given by
\[
\frac{d\left<M\right>_t}{dt}=\beta^2\Smx \mkbt(X(t)=x)^2=
\beta^2\mkbt^{\otimes2}(X(t)=\widetilde{X}(t))
.\]
As a consequence we
derive from (\ref{eq:1405}) both the existence of the limit (\ref{eq:limex}) and the relation (\ref{eq:valI}),
\[{\Pskb}=\frac{\beta^2}{2}\left(1-\lim_{t\to\infty}\,\frac1t\,\int_0^t\mkbs^{\otimes2}\left(X(s)=\widetilde{X}(s))\right)ds\right),\,\,a.s.\]
Indeed, 
\begin{eqnarray}
\begin{split}
\frac{d\left<M\right>_t}{dt}=&\beta^2\Smx \mkbt(X(t)=x)^2\\
\le&\beta^2\sup_{y\in{{\mathbb Z}^d}} \mkbt(X(t)=y)\Smx \mkbt(X(t)=x)\\
\le&\beta^2.
\end{split}
\end{eqnarray}
Thus, $\lim_{t\to\infty}\frac1t M_t=0$ and the rest is clear.

\begin{proof}[Proof of Proposition \ref{th:It}] We have just proved statements (i). By (\ref{eq:defJ82a}), and since the complement of $\D_\kappa$ is at most countable, we obtain the claims (ii).
\end{proof}
\begin{proof}[Proof of Corollary \ref{cor2.1}]  (i) is a consequence of  (\ref{eq:valI})  and of the definition (\ref{eq:gamma_c}) of $\Upsilon_c$. 

(ii) follows from the integral formula  (\ref{eq:formint}). For instance, for $0<\beta  < \beta_1$ such that 
$(\beta^2/2)-\Psi(\kappa,\beta) \neq (\beta_1^2/2)-\Psi(\kappa,\beta_1)$, the first term is smaller than the second one by
monotonicity, and the difference
$$
0<(\beta_1^2/2)-\Psi(\kappa,\beta_1)-(\beta^2/2)+\Psi(\kappa,\beta) =\int_\beta^{\beta_1} r \left[1-\widetilde{J}_{\kappa,r,\infty}\right] dr,
$$
so that there exists $r \in (\beta,\beta_1)$ such that $\widetilde{J}_{\kappa,r,\infty}>0$. From this and similar considerations, we easily  obtain the second statement. Under the assumption of the 
last claim, the equality (\ref{eq:defJ82}) holds, proving the claim.
\end{proof}
\begin{proof}[Proof of Proposition \ref{th4}] The claim  (i) is a consequence of (\ref{eq:huit}) and  (\ref{eq:valI}).The claim in (ii) is simply the second part of Proposition \ref{th:Jt}.
\end{proof}
\begin{proof}[Proof of Theorem \ref{th:localization}] 
Letting $a_T=1-\IkbT$ for a short notation, we have
  \begin{eqnarray*}
 \label{at:eqn}
a_T &:=& 1- 
\frac{1}{T} \int_0^T \mkbt^{\otimes 2}( X(t) = \widetilde X(t) ) dt \\
 &=& \frac{1}{T} \int_0^T \sum_{x \in {\mathbb Z}^d}
    \mkbt( X(t)=x )  \mkbt( X(t)\neq x ) dt\\
 & = &
 \frac{1}{T} \int_0^T \left( 1 - \sum_{x \in {\mathbb Z}^d}
    \mkbt( X(t)=x )^2 \right) dt.
 \end{eqnarray*}
Let us denote 
$$m_t= \mkbt( X(t) \neq x^*(t)), \qquad 
b_T= \frac{1}{T} \int_0^T 
m_t dt.
$$
By splitting off the term for $x=x^*(t)$ from the sum of the terms for $x\not=x^*(t)$ in (\ref{at:eqn}) we get,
\begin{eqnarray*}
  a_T
 & = &
 \frac{1}{T} \int_0^T \left( 1 - (1-m_t)^2 - \sum_{x \neq x^*(t)}
    \mkbt( X(t)=x )^2 \right) dt\\
 & = &
2 b_T - \frac{1}{T} \int_0^T m_t^2 dt 
- \frac{1}{T} \int_0^T  \sum_{x \neq x^*(t)}
    \mkbt( X(t)=x )^2  dt.
\end{eqnarray*}
Clearly, it implies that $a_T  \leq 2b_T$, but also an estimate in the reverse direction.
Since 
\begin{eqnarray*}
 \sum_{x \neq x^*(t)}    \mkbt( X(t)=x )^2 
 &\leq&    \sum_{x \neq x^*(t)}  \mkbt( X(t)=x^*(t) ) \times \mkbt( X(t)=x )\\
 &=& (1-m_t) m_t,
\end{eqnarray*}
by definition of $x^*$, we obtain 
$$b_T \leq a_T  \leq 2b_T.$$
Therefore, (i) in Theorem \ref{th:localization} follows from points (i) of Corollary \ref{cor2.1} and of Proposition\ref{th4}.
%
%
%

We now turn to the proof of (ii).
Repeating the same steps with $\mkbT$ instead of $\mkbt$, except for the splitting according to $x=y^*_T(t)$ or not,
we see that
$$
\bar a_T=1- 
E \frac{1}{T} \int_0^T  \mkbT^{\otimes 2}( X(t) = \widetilde X(t) ) dt, \quad
$$
and
$$
\bar b_T= E \frac{1}{T} \int_0^T  \mkbT( X(t) \neq y^*_T(t))
 dt,
$$ 
are such that
$$
\bar b_T \leq \bar a_T  \leq 2 \bar b_T .
$$
Since $ \bar a_T = 1-E \JkbT$, point (ii) in Theorem \ref{th:localization} follows from points
(ii) of Corollary \ref{cor2.1} and of Proposition \ref{th4}.
\end{proof}

\section{Jump Distribution} \label{sec:jumps}

Let $E_S$ denote the expectation for the simple, discrete time random walk 
$S=\{S(i); i \in \IN\}$ on $\ZZ^d$ with discrete time, and,  
for $n \in \IN$, 
$\TT_{n,T}=\{ (t_1,\ldots, t_n): 0<t_1<\ldots t_n<T\}$.
The quantity 
\begin{equation} \label{eq:cabourg1}
E_\kappa \left[ e^{\be H_T(X)} \vert N(T,X)=n \right] 
\end{equation}
does not depend on $\kappa$ and is equal to
\begin{eqnarray} \nonumber
\left[ \frac{(\kappa T)^{n}}{n!}e^{-\kappa T}\right]^{-1}&
{\mathop{\int\! \ldots\! \int}_{\TT_{n,T}}} 
E_S
\left[ 
\prod_{i=0}^n
e^{\be [W_{S(i)}(t_{i+1}) -W_{S(i)}(t_{i})]} 
\kappa e^{-\kappa (t_{i+1}-t_i)}
\right]  
dt_1\ldots
dt_n \qquad\qquad \qquad
\\ \nonumber = &
 T^{-n}n!
\mathop{\int \!\ldots\! \int}_{\TT_{n,T}}
E_S
\left[ e^{\be \sum_{i=0}^n [W_{S(i)}(t_{i+1}) -W_{S(i)}(t_{i})]}\right]  dt_1\ldots
dt_n, \qquad \qquad \qquad \qquad
\end{eqnarray}
where we have set $t_0=0, t_{n+1}=T$.
Under the law on path space defined by (\ref{eq:cabourg1}), the jump times
and jump values are independent, with respective distributions, 
uniform on $[0,T]$, and $P_S$.  

\begin{prop} \label{prop:existsLm}
The following limits exist a.s. and in $L^p, p \in [1,\8)$, and are equal:
\begin{eqnarray}
  \label{eq:Lm}
\Lm(\kappa, \be, r) &=& \lim_{T \to \8}
 T^{-1} \ln E_\kappa  \left[ e^{\be H_T(X)}; N(T,X)=[rT] \right]\\
& =& \lim_{T \to \8, n/T \to r}
 T^{-1} \ln E_\kappa  \left[ e^{\be H_T(X)}; N(T,X)=n \right]  
\nonumber
\end{eqnarray}
The limit is deterministic, convex in $\be$, and concave in $r$. 
\end{prop}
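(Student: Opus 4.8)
The plan is to establish the existence of $\Lambda(\kappa,\be,r)$ via a subadditivity argument and then extract the analytic properties. First I would fix $r>0$ and work with the representation of $E_\kappa[e^{\be H_T(X)};N(T,X)=n]$ coming from the Poisson structure, conditioning on the number of jumps. Writing $\phi_T(n) = E_\kappa[e^{\be H_T(X)};N(T,X)=n]$, the key observation is that the Hamiltonian $H_T(X)=\sum_i [W_{S(i)}(t_{i+1})-W_{S(i)}(t_i)]$ splits across a time cut at $T_1$: for a path making $n_1$ jumps on $[0,T_1]$ and $n_2$ on $[T_1,T_1+T_2]$, the increments of the environment over the two blocks are independent. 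This yields an approximate supermultiplicativity of the form $\phi_{T_1+T_2}(n_1+n_2) \gtrsim \phi_{T_1}(n_1)\,\phi'_{T_2}(n_2)$, where the prime indicates a shifted, independent copy of the environment. Taking logarithms, $\ln\phi_T([rT])$ is (up to boundary corrections from the single jump straddling the cut, and combinatorial factors that are subexponential) a superadditive stochastic process.

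Next I would invoke Kingman's subadditive ergodic theorem, applied to the family indexed by time, to conclude that $T^{-1}\ln\phi_T([rT])$ converges a.s. and in $L^1$ to a deterministic constant $\Lambda(\kappa,\be,r)$; the determinism follows from ergodicity of the environment under time shifts. To upgrade to $L^p$ convergence for all $p\in[1,\infty)$ and to identify the a.s. limit along $n/T\to r$ with the limit along $n=[rT]$, I would use the concentration estimate already available: arguing as in Lemma \ref{lem:concentration}, the fluctuations of $\ln\phi_T(n)$ about its mean are Gaussian with variance of order $\beta^2 T$, since the relevant Malliavin derivative has $L^2$ norm bounded by $\beta^2 T$ uniformly in $n$. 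This gives the $L^p$ control and, via a Borel--Cantelli argument, lets me replace $[rT]$ by any sequence $n$ with $n/T\to r$ without changing the limit.

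For the analytic properties: convexity in $\be$ is inherited from the convexity of $\be\mapsto \ln E[e^{\be Y}]$ for each fixed environment (H\"older's inequality), and passes to the limit. Concavity in $r$ I would obtain from the superadditive structure directly: by concatenating a block with jump density $r_1$ and a block with density $r_2$ in proportions $\lambda,1-\lambda$, one builds a path of density $\lambda r_1+(1-\lambda)r_2$, giving $\Lambda(\kappa,\be,\lambda r_1+(1-\lambda)r_2)\ge \lambda\Lambda(\kappa,\be,r_1)+(1-\lambda)\Lambda(\kappa,\be,r_2)$ in the limit. I expect the \emph{main obstacle} to be the bookkeeping in the superadditivity step: the combinatorial prefactor $T^{-n}n!$ from the explicit formula in (\ref{eq:cabourg1}) does not factor exactly across the time cut, so I must check that the discrepancy in these binomial-type weights, together with the contribution of the jump straddling the cutpoint and the constraint $n_1+n_2 = [rT]$ rather than $[r_1 T_1]+[r_2 T_2]$, is $e^{o(T)}$ and hence invisible in the exponential rate. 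Establishing this uniform subexponential control, rather than the soft ergodic-theorem machinery, is where the real work lies.
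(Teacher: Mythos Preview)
Your overall architecture---superadditivity plus concentration---is the same as the paper's, and your treatment of convexity in $\beta$ and concavity in $r$ is fine. The gap is in the ergodic step. You cannot invoke Kingman's theorem here, because the pathwise supermultiplicativity you write, $\phi_{T_1+T_2}(n_1+n_2)\gtrsim\phi_{T_1}(n_1)\,\phi'_{T_2}(n_2)$, does not hold for the \emph{same} realization of the environment. When you cut at time $T_1$ and apply the Markov property, the second factor is the partition function started from the random point $X(T_1)$, not from $0$; it is therefore not $\phi_{T_2}$ computed in the time-shifted environment, and your own notation (``a shifted, independent copy'') concedes this. Kingman requires the pointwise inequality $X_{0,t+s}\ge X_{0,t}+X_{t,t+s}$ on a single probability space, and that fails here.

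The paper's remedy is Lemma~\ref{lem:stosup} (the ``stochastic superadditive lemma''), introduced precisely because the subadditive ergodic theorem does not apply: one needs only $EU_{t+s}\ge EU_t+EU_s$, i.e.\ superadditivity \emph{in expectation}, together with $t^{-1}(U_t-EU_t)\to 0$. The first holds because Markov plus Jensen give $U_{t+s}\ge U_t+\mu_{\kappa,\beta,t}^{(r)}\big[U_s\circ\theta_{t,X(t)}\big]$, and upon taking $E$, spatial homogeneity and independence of the Brownian increments yield $E\big[\mu_{\kappa,\beta,t}^{(r)}[U_s\circ\theta_{t,X(t)}]\big]=EU_s$. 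The second is exactly the concentration argument you sketched (the paper combines Lemma~\ref{lem:concentration} along a sparse time sequence with a martingale/Doob bound to bridge the gaps). So the fix is small but essential: replace Kingman by mean-superadditivity plus concentration.

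Incidentally, the obstacle you flag---the combinatorial prefactor $T^{-n}n!$---is a non-issue if you work directly with $E_\kappa[e^{\beta H_T};\,N(T,X)=n]$ rather than with the conditional version: the Poisson process has independent increments, so $P_\kappa(N(t)=rt,\,N(t+s)-N(t)=rs)$ factors exactly and no binomial correction appears. The real difficulty is the random endpoint at the cut, not the bookkeeping.
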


We will use the following observation, which has been found to be useful in similar 
situations, where the subadditive (or superadditive)
ergodic theorem does not apply.
\begin{lemma}[Stochastic superadditive lemma] \label{lem:stosup}
Let $U_t$ be an integrable random process indexed by $t$ in $\IN$ or $\IR_+$,
such that\\
\textup{(i)} $\E U_{t+s} \geq \E U_t + \E U_s, \quad s, t \geq 0,$\\
and\\
\textup{(ii)} $\frac{1}{t}(U_t- \E U_t)  \stackrel{*}{\longrightarrow} 0$ as $ t \to \8$,
where $ \stackrel{*}{\longrightarrow}$ 
is some stochastic mode of convergence (a.s., in probability, 
in $L^p$, \ldots). 

Then, as $ t \to \8$,
$$
\frac{U_t}{t} \stackrel{*}{\longrightarrow} \sup\left\{\frac{\E U_t}{t}; \,\,t \geq 0\right\}
$$
with the same mode of convergence.
\end{lemma}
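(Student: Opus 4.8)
The plan is to prove the stochastic superadditive lemma by separating the deterministic part (the expectations) from the stochastic fluctuation. The key observation is that hypothesis (i) says the deterministic sequence $a_t := \E U_t$ is superadditive, so by the classical (deterministic) Fekete superadditivity lemma,
\[
\lim_{t \to \8} \frac{\E U_t}{t} = \sup_{t \geq 0} \frac{\E U_t}{t} =: L,
\]
where $L \in (-\8, +\8]$. The value $L$ is exactly the supremum appearing in the conclusion. Then hypothesis (ii) provides that the random deviation $\frac{1}{t}(U_t - \E U_t)$ tends to $0$ in the prescribed mode. The whole strategy is then to write
\[
\frac{U_t}{t} = \frac{U_t - \E U_t}{t} + \frac{\E U_t}{t},
\]
and conclude that the left-hand side converges to $L$ by combining the two pieces.

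First I would invoke Fekete's lemma on $(a_t)_t$. In the discrete case $t \in \IN$ this is completely standard; for the continuous-parameter case $t \in \IR_+$ one checks that superadditivity of $t \mapsto a_t$ together with integrability still yields $a_t/t \to \sup_s a_s/s$ — the usual proof carries over, writing an arbitrary $t$ as $t = k s + \rho$ with $0 \leq \rho < s$ and using superadditivity $a_t \geq k\, a_s + a_\rho$, provided one controls the remainder term $a_\rho/t$ on the bounded interval $[0,s)$. Next, hypothesis (ii) says precisely that the fluctuation term $\frac{1}{t}(U_t - \E U_t) \stackrel{*}{\longrightarrow} 0$.

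Then I would combine the two limits. Since the deterministic term $\E U_t / t$ converges (to $L$) as a genuine numerical limit, and the stochastic term converges to $0$ in the mode $\stackrel{*}{\longrightarrow}$, their sum converges in the same mode $\stackrel{*}{\longrightarrow}$ to $L + 0 = L$. This uses only that adding a deterministic convergent sequence to a sequence converging in a given stochastic mode preserves convergence in that mode (true for a.s., in probability, and in $L^p$ convergence alike); when $L = +\8$ one argues directly that $U_t/t \to +\8$ in the appropriate sense. This establishes
\[
\frac{U_t}{t} \stackrel{*}{\longrightarrow} \sup\left\{\frac{\E U_t}{t}; \, t \geq 0\right\},
\]
as claimed.

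The main subtlety, rather than a genuine obstacle, is the handling of the continuous-parameter version of Fekete's lemma and the measurability/integrability issues it requires (one wants $t \mapsto \E U_t$ to be, say, measurable and locally bounded so that the remainder $a_\rho/t$ is negligible). The decomposition itself is elementary; the only real care needed is ensuring that the chosen mode of convergence $\stackrel{*}{\longrightarrow}$ is stable under adding a deterministically convergent sequence, which holds for all the standard modes listed. I expect no difficulty beyond these bookkeeping points, since all the analytic content is already packaged into the two hypotheses.
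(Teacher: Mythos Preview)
Your proposal is correct and follows essentially the same approach as the paper: apply Fekete's superadditive lemma to the deterministic sequence $\E U_t$ to get $\E U_t/t \to \sup_t \E U_t/t$, then add the fluctuation from hypothesis (ii). The paper's proof is just two sentences to this effect; your write-up simply spells out the decomposition $U_t/t = (U_t-\E U_t)/t + \E U_t/t$ and the bookkeeping more explicitly.
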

\begin{proof}[Proof of Lemma \ref{lem:stosup}]
From (i) and the superadditive lemma, 
$$
\lim_{t \to \8} \frac{\E U_t}{t} = \sup\left\{\frac{\E U_t}{t}; \,\,t \geq 0 \right\}.
$$
The claims now follows from (ii).
\end{proof}
\begin{proof}[Proof of Proposition \ref{prop:existsLm}]
For $r \in {\mathbb Q}_+$, we now check that 
Lemma \ref{lem:stosup} applies to the sequence 
$$
U_t =  \ln E_\kappa  \left[ e^{\be H_t(X)}; N(t,X)=rt \right],
$$
$t \in \IN$ with $rt \in \IN$. 
First, define a probability measure on the set of paths $\mathcal{D}_\infty$ by $\mkbt^{(r)}(\cdot)=\mkbt(\cdot \,| N(t,X)=rt),$
we have for $s \in r^{-1}\IN$,
\begin{eqnarray} \nonumber
U_{t+s} &\geq&
\ln E_\kappa  \left[ e^{\be H_{t+s}(X)}; N(t+s,X)=r(t+s),  N(t,X)=rt \right]\\
\nonumber
&=& 
\ln E_\kappa  \left[ e^{\be H_{t}(X)}e^{\be [H_{t+s}(X)-H_{t}(X)]};
 N(t+s,X)=r(t+s),  N(t,X)=rt \right]\\ \nonumber
&\stackrel{\rm Markov}{=}& 
 U_{t} +
\ln \mkbt^{(r)}\left[ \exp U_{s} \circ \theta_{t,X(t)} \right] \qquad
(\theta \mbox{ time-space shift})\\ \label{eq:cabourg2}
&\stackrel{\rm Jensen}{\geq}& 
 U_{t} + \mkbt^{(r)}\left[U_{s} \circ \theta_{t,X(t)}\right]
\end{eqnarray}
Since $\{W_x:x\in\zd\}$ are independent Brownian motions, 
\begin{eqnarray*}
E \left[ U_{t} + \mkbt^{(r)}\left[U_{s} \circ \theta_{t,X(t)}\right]\right]
&=&
E U_{t} 
+E \mkbt^{(r)}\left[
E\left( U_{s} \circ \theta_{t,X(t)}\right)\right]\\
&=&
E U_{t}  +E U_{s},
\end{eqnarray*}
which, together with (\ref{eq:cabourg2}), proves  (i). 

To show (ii) with a.s. convergence, we combine concentration 
and martingale inequalities. Let $1/2 < a <1$, and $\{T_n\}_{n\ge1}$ be the
sequence defined by $T_1=1$, $T_{n+1}=T_n+T_n^a$. Then $T_n = 
n^{\frac{1}{1-a} + o(1)}$
 as $n \to \8$. By Lemma \ref{lem:concentration} with 
$A=\{N(t,X)=rt\}$, it is easily seen that (ii) holds along the sequence $T_n$ using the Borel-Cantelli lemma. We now bridge the gaps.
By It\^o's formula, 
$$
U_T= M_T - (1/2) \langle M \rangle_T + \beta^2T/2
$$
for some continuous martingale $M$ with $(d/dt)\langle M \rangle_t \leq \beta^2$ for all $t \geq 0$.
Fix a sequence $\epsilon_n \to 0$ with $\epsilon_n^{-1} = n^{o(1)}$. 
For $n$ large, $\beta^2 T_n^a < \epsilon_n T_{n+1}$, and then
\begin{eqnarray*}
  Q \left[ \sup_{T_n \leq T \leq T_{n\!+\!1}} 
|U_T\!-\!U_{T_n}\!-\!E U_T\!+\!EU_{T_n}|
> 2 \epsilon_n T_{n\!+\!1} \right] 
&\leq& 
  E \left[ \sup_{T_n \leq T \leq T_{n\!+\!1}} 
|M_{ T}\!-\!M_{T_n}|  >\epsilon_n T_{n+1}
\right]  \\
&
\stackrel{\rm Doob}{\leq}& 
(\epsilon_n T_{n+1})^{-2} 
E\left[ \langle M  \rangle_{ T_{n+1}}- \langle
M \rangle_{T_n} \right] 
\\ 
&\leq& 
(\epsilon_n T_{n+1})^{-2} 
\beta^2 (T_{n+1}-T_n),
\end{eqnarray*}
which defines a summable series if we choose $a \in (1/2,1)$ large enough.
By Borel-Cantelli, this completes the proof of (ii). 

The limit $\Lm$ is convex in $\be$ as a limit of convex functions.
We now check concavity in $r$.  First note that
$$
V_{T,n}=\ln E_\kappa  \left[ e^{\be H_{T}(X)};N(T,X)=n \right],
$$
which is equal to $U_T$ with $r=T/n$, satisfies
\begin{eqnarray}
\begin{split}\label{concavity}
V_{T+T',n+n'} 
&\geq& 
\ln E_\kappa  \left[ e^{\be H_{T+T'}(X)};N(T,X)=n,  N(T+T',X)=n+n' \right]\\
&=&
V_{T,n}+ \ln \mkbT^{(r)}
[\exp V_{T+T',n+n'}\circ \theta_{T,X(T)}]  \qquad (r=T/n).  
\end{split}
\end{eqnarray}
Proceeding  as in (\ref{eq:cabourg2}), and 
letting $T,T',n,n' \to \8$ in such a way that $n/T \to r, n'/T'\to r',
T/T' \to \lambda/(1-\lambda)$ with $r,r' \geq 0$ and $\lambda \in [0,1]$,
 we get
$$
 \Lm(\kappa, \be, \lambda r+(1-\lambda)  r'\big) \geq
\lambda \Lm(\kappa, \be, r)+(1-\lambda) \Lm\big(\kappa, \be, r'),
$$
i.e, the desired concavity.
\end{proof}
\begin{proof}[Proof of Proposition \ref{th:existsGm}]
Writing the conditional expectation as a ratio, we now see that the limit
$$ 
 \lim_{T \to \8}
 T^{-1} \ln E_\kappa  \left[ e^{\be H_T(X)}| N(T,X)=[rT] \right]
$$
exists, and is equal to $\Lm(\kappa, \be, r) + I_\kappa(r)$ by Cram\'er's 
theorem.  This shows the existence of $\Gm$, and also that
\begin{equation}
  \label{eq:formulaLm}
  \Lm(\kappa, \be, r)=\Gkbr - I_\kappa(r)\;.
\end{equation}

We now turn to the scaling relation. 
Under $P_\kappa(\cdot\,| N(T,X)=n )$, $X_{[0,T]}:=(X_t; t \in [0,T])$ 
has $n$ jumps on $[0,T]$
the values and times of which are independent, uniformly distributed. 
Then, with $X^{(a)}: s \mapsto  X(s/a)$, the following image laws
are equal
  \begin{equation}
    \label{eq:cabourg10}
X^{(a)}_{[0,aT]} \circ P_x^\kappa(\cdot\,| N(T,X)=n )
= X_{[0,aT]} \circ P_x^\kappa(\cdot\,| N(aT,X)=n ).
  \end{equation}
Also, $W^{(a)}_x(s)= a^{1/2} W_x(s/a)$ defines a collection, $\mathcal{W}^{(a)}=\{W^{(a)}_x:x\in\ZZ^d\},$
of independent
standard Brownian motions. Denoting by $t_i, \,i=1,\ldots n,$ the jump times
of $X$ and $t_0=0, t_{n+1}=T$, we have 
by definition, 
$$
H_T^{W}(X)
= 
\sum_{i=0}^n [W_{X_{t_i}}(t_{i+1})-W_{X_{t_i}}(t_i)]\\
= 
a^{-1/2}  H_{aT}^{W^{(a)}}(X^{(a)}),
$$
and also,
\begin{eqnarray}
\begin{split}\label{gammascale}
\frac{1}{T} \ln   E_\kappa \left[ e^{\be H_T(X)} \vert N(T,X)=n \right] 
&=&
\frac{a}{aT} \ln   
E_\kappa \left[ e^{{\be}{a^{-1/2}} H_{aT}^{W^{(a)}}(X^{(a)})
} \vert N(T,X)=n \right]\\
&=&
\frac{a}{aT} \ln   
E_\kappa \left[ e^{{\be}{a^{-1/2}} H_{aT}^{W^{(a)}}(X)
} \vert N(aT,X)=n \right]
\end{split}
\end{eqnarray}
by (\ref{eq:cabourg10}). The first scaling relation follows from taking the 
limit $T \to \8, n/T \to r$. The convexity in $\beta$ follows from H\"{o}lder's inequality. For continuity we need to establish $\lim_{a\to1}\Gamma(\beta,ar)=\Gkbr.$ But, by scaling, $\Gamma(\beta,ar)=a\Gamma(\beta/\sqrt{a},r)$ and the result follows from continuity in $\beta.$
\end{proof}
Here is a direct consequence of (\ref{eq:formulaLm}) and of the scaling.
\begin{cor} For all $r$, $\Lm$ is jointly convex in $(\kappa, \beta)$, and 
  \begin{equation}
    \label{eq:scalingLm}
\Lm(\kappa, \be, r) = a \Lm(a^{-1}\kappa, a^{-1/2}\be, a^{-1}r).
  \end{equation}
\end{cor}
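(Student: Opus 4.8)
The plan is to read both assertions directly off the identity (\ref{eq:formulaLm}), $\Lm(\kappa,\be,r)=\Gm(\be,r)-\textup{I}_\kappa(r)$, exploiting that the two summands decouple the two parameters: the field term $\Gm(\be,r)$ carries the full $\be$-dependence but is independent of $\kappa$ (Proposition \ref{th:existsGm}), while the entropic term $\textup{I}_\kappa(r)$ carries the full $\kappa$-dependence but is independent of $\be$. I would dispatch the scaling relation first, as it is a one-line substitution, and then turn to the convexity.

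For (\ref{eq:scalingLm}) I would expand the right-hand side using (\ref{eq:formulaLm}): $a\,\Lm(a^{-1}\kappa,a^{-1/2}\be,a^{-1}r)=a\big[\Gm(a^{-1/2}\be,a^{-1}r)-\textup{I}_{a^{-1}\kappa}(a^{-1}r)\big]$. The field piece is settled by the scaling of $\Gm$ already proved in (\ref{eq:scalingGmseul}), which gives $a\,\Gm(a^{-1/2}\be,a^{-1}r)=\Gm(\be,r)$. For the entropic piece one checks the elementary homogeneity $\textup{I}_{a^{-1}\kappa}(a^{-1}r)=a^{-1}\textup{I}_\kappa(r)$ straight from the definition (\ref{def:Ikappa}), since $a^{-1}r\ln\frac{a^{-1}r}{a^{-1}\kappa}-a^{-1}r+a^{-1}\kappa=a^{-1}\big(r\ln\frac r\kappa-r+\kappa\big)$; multiplying by $a$ returns $\textup{I}_\kappa(r)$. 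Adding the two contributions recovers $\Gm(\be,r)-\textup{I}_\kappa(r)=\Lm(\kappa,\be,r)$, which is exactly (\ref{eq:scalingLm}).

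For the joint convexity in $(\kappa,\be)$ at fixed $r$ the route I would take rests on the same decoupling: because $\Gm(\be,r)$ contributes nothing to the $\kappa$-dependence and $\textup{I}_\kappa(r)$ nothing to the $\be$-dependence, the mixed second derivative $\partial_\kappa\partial_\be\Lm$ vanishes and the Hessian of $\Lm$ in $(\kappa,\be)$ is block diagonal. Joint convexity then reduces to convexity in each variable separately: the $\be$-block is nonnegative by the convexity of $\Gm(\cdot,r)$ from Proposition \ref{th:existsGm}, so the whole question concentrates on the $\kappa$-block, which is governed entirely by $\kappa\mapsto\textup{I}_\kappa(r)$. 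The hard part will be controlling this $\kappa$-direction and pinning down the sign of the contribution of $-\textup{I}_\kappa(r)$; to handle it cleanly I would pass to the perspective form obtained from (\ref{eq:scalingLm}) with the choice $a=\kappa$, namely $\Lm(\kappa,\be,r)=\kappa\,\Lm(1,\kappa^{-1/2}\be,\kappa^{-1}r)$, which displays $\Lm$ as a degree-one homogeneous rescaling of the base profile $\Lm(1,\cdot,\cdot)$ and recasts the problem as the convexity of this perspective in $(\kappa,\be)$. Establishing the claim would then amount to combining this homogeneity with the convexity of the base profile in its first slot, and this is the step I expect to require the most care.
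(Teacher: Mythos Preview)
Your treatment of the scaling relation is correct and is exactly what the paper has in mind: the one-line remark ``a direct consequence of (\ref{eq:formulaLm}) and of the scaling'' is nothing more than the substitution you wrote out, combining $\Gm(\be,r)=a\,\Gm(a^{-1/2}\be,a^{-1}r)$ with the elementary homogeneity $\textup{I}_{a^{-1}\kappa}(a^{-1}r)=a^{-1}\textup{I}_\kappa(r)$.

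The convexity half of your proposal, however, is left genuinely incomplete, and the gap cannot be closed. You correctly reduce the question, via the decoupling $\Lm(\kappa,\be,r)=\Gm(\be,r)-\textup{I}_\kappa(r)$, to separate convexity in $\be$ (which holds by Proposition~\ref{th:existsGm}) and in $\kappa$. But you stop short of actually computing the $\kappa$-direction, saying only that it ``will require the most care.'' Had you differentiated, you would have found
\[
\frac{\partial^2}{\partial\kappa^2}\big(-\textup{I}_\kappa(r)\big)
=\frac{\partial^2}{\partial\kappa^2}\big(r\ln\kappa-\kappa\big)
=-\frac{r}{\kappa^2}\le 0,
\]
with strict inequality for $r>0$. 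Thus $\kappa\mapsto\Lm(\kappa,\be,r)$ is strictly \emph{concave} for every $r>0$, and joint convexity in $(\kappa,\be)$ fails outright. The perspective reformulation you sketch cannot repair this: writing $\Lm(\kappa,\be,r)=\kappa\,\Lm(1,\kappa^{-1/2}\be,\kappa^{-1}r)$ and invoking perspective-type convexity would require convexity of the base profile $\Lm(1,\cdot,\cdot)$ jointly in its two arguments, but $\Lm$ is concave in $r$ (Proposition~\ref{prop:existsLm}), so that route is blocked as well.

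In short, the scaling claim is fine and your argument matches the paper's; the joint-convexity claim as stated is simply false for $r>0$ and appears to be a misprint. The paper does not use this convexity anywhere, so nothing downstream is affected.
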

\begin{proof}[Proof of Proposition \ref{th:valfreeenergy}]
For $a, b>0$, write
  \begin{eqnarray}
  \begin{split}
E_\kappa \left[ e^{\be H_T(X)} \right] =\sum_{aT \leq n \leq bT} &E_\kappa \left[ e^{\be H_T(X)} ; N(T,X)=n \right] +
E_\kappa \left[ e^{\be H_T(X)} ; N(T,X)> bT \right]\\
\quad+&E_\kappa \left[ e^{\be H_T(X)} ; N(T,X)< aT \right].
  \end{split}
  \end{eqnarray}
By the concentration inequality in Lemma \ref{lem:concentration}, as $T \to \8$,
$$
T^{-1}\left( \ln E_\kappa \left[ e^{\be H_T(X)} ; N(T,X)> aT \right]
- E\left[  \ln E_\kappa \left[ e^{\be H_T(X)} ; N(T,X)> aT \right] \right]\right)
\to 0
$$
a.s. and in $ \textup{L}^p$. By Jensen's inequality,
$$
T^{-1} E\lt[  \ln E_\kappa \left[ e^{\be H_T(X)} ; N(T,X)> bT \right] \rt]
\leq \beta^2/2 - \textup{I}_\kappa(B),
$$
which can be made arbitrarily negative by taking $b$ large.
Similarly, 
$$
T^{-1} E\lt[  \ln E_\kappa \left[ e^{\be H_T(X)} ; N(T,X)<aT \right] \rt]
\leq \beta^2/2 - \textup{I}_\kappa(a),
$$
which can be made arbitrarily negative by taking $a$ small.
Thus, for $a$ sufficiently small and $b$ sufficiently large,
$$\lim_{T\to\8}T^{-1} \ln E_\kappa \left[ e^{\be H_T(X)} \right]= \lim_{T\to\8}T^{-1} \ln E_\kappa \left[ e^{\be H_T(X)} ; aT<N(T,X) <bT \right].$$
Define $\Gamma_T(\beta,r)=T^{-1}E \ln E_\kappa \left[ e^{\be H_T(X)}|N(X,T)=[rT] \right]$ which is a convex function of $\beta$ converging point-wise to $\Gkbr$ which is also convex in $\beta.$  
The conditional version of the concentration inequality holds due to cancellation, that is
\[P\lt(\lt|\frac{1}{T}\ln E_\kappa \left[ e^{\be H_T(X)} | N(T,X)=[rT] \right] -\Gamma_T(\beta,r)\rt| \ge u\rt)\le 2\exp\lt\{-\frac{u^2T}{2\beta^2}\rt\}. \]
Write $r=sa$ with $s$ ranging over $C\equiv[1,\frac{b}{a}]\cap n^{-1}{\ZZ}.$ Since the number of points in $C$ grows like $n,$ we conclude by Borel-Cantelli that
\[P\lt(\sup_{s\in C}\lt|\frac{1}{n}\ln E_\kappa \left[ e^{\be H_n(X)} | N(n,X)=[sa T] \right] -\Gamma_n(\beta,sA)\rt|>\epsilon/4\,\,i.o.\rt)=0.\] 

We also note that by convexity of both $\Gamma_T$ and $\Gamma$ that $\Gamma_T(\beta,r)$ converges uniformly for $\beta$ in a compact interval to $\Gamma(\beta,r).$ By (\ref{gammascale}) it follows that $\Gamma_T(\beta,ar)=a\Gamma_{aT}(a^{-1/2}\beta,r)$ which implies uniform convergence of $\Gamma_T(\beta,r)$ to $\Gamma(\beta,r)$ for $r$ in a compact interval.

Finally, for $0<a<b<\infty,$
\begin{eqnarray*}
\sum_{r\in[a,b]\cap n^{-1}{\bf{Z}}} E_\kappa \left[ e^{\be H_n(X)} ; N(n,X)\!=\![rn] \right] &=&\sum_{r\in[a,b]\cap n^{-1}{\bf{Z}}} \Ek  \left[ \exp\{\be H_n(X)\}| N(n,X)=[rn] \right]\\
&&\quad\times P_\kappa(N(n,X)=[rn] )\\
&=& e^{o(n)}\sum_{r\in[a,b]\cap n^{-1}{\bf{Z}}} e^{\Gamma_n(\be,r)n}P_\kappa(N(n,X)=[rn] )\\
&=& e^{o(n)}\sum_{r\in[a,b]\cap n^{-1}{\bf{Z}}} e^{\Gamma(\be,r)n}P_\kappa(N(n,X)=[rn] )\\
&=& e^{o(n)} \sum_{r\in[a,b]\cap n^{-1}{\bf{Z}}} e^{(\Gamma(\be,r)-I_\kappa(r))n}.
\end{eqnarray*}
Thus,
\begin{eqnarray}
\begin{split}
T^{-1}\ln E_\kappa \left[ e^{\be H_T(X)} \right]\sim &\quad  T^{-1} \ln E_\kappa \left[ e^{\be H_T(X)} ; aT<N(T,X)< bT \right]\\
=&\quad  o(1)+T^{-1}\ln \sum_{r\in[a,b]\cap T^{-1}{\bf{Z}}} e^{(\Gkbr-I_\kappa(r))T}\\
=& \quad \sup\big\{\Gamma(\be,r)-I_\kappa(r); r \in [a,b]\big\}+o(1)\\
\to& \quad \Psi(\kappa,\beta),\qquad T\to\infty,
\end{split}
\end{eqnarray}
 by the standard Laplace method.
\end{proof}
\begin{remark}\normalfont
We have shown that
$$
\Pskb = \sup\{ \Lm(\kappa, \beta,r); r \geq 0\}.
$$  
Since $\textup{I}_\kappa$ is non negative and zero if $r=\kappa$, and by \textup{(\ref{eq:formulaLm}\textup)}, we have
$$
\Gm(\beta, r) = \sup\{ \Lm(\kappa, \beta,r); \kappa > 0\}.
$$
\end{remark}
\begin{proof}[Proof of Theorem \ref{th:ldp}] The first statement directly 
follows from the previous results. The large deviation principle 
(\ref{eq:ldp}) is proved in a manner similar to  Proposition \ref{th:valfreeenergy}
\end{proof}
\begin{proof}[Proof of Proposition \ref{th:CMS}.] 
By (\ref{eq:scalingGmseul}), 
\begin{equation} \label{eq:stchely1}
\frac{\Gm(\be,r)}{\be}=r\; \frac{\Gm(r^{-1/2}\be,1)}{\be} =
\sqrt{r} \; \frac{\Gm(r^{-1/2}\be,1)}{r^{-1/2}\be}.
\end{equation}
We claim 
\begin{equation} \label{eq:stchely3}
\lim_{\be \to \8}  \beta^{-1} \Gm( \beta, 1) = \alpha.
\end{equation}
i.e., (\ref{eq:stchely2}) holds for $r=1$, and that for all $\beta$,
$ \beta^{-1} \Gm( \beta, 1) \leq \alpha$. 
Indeed,
\begin{eqnarray}
\begin{split}\label{eq:gammaub}
 \frac{1}{\beta} \Gm( \beta, 1) =&\lim_{T \to \8}   \frac{1}{\beta T}\ln \Ek[e^{\be H_T(x)}|N(T,X)=T]\\
\le& \lim_{T \to \8}   \frac{1}{\beta T} \be A_T\\
=&\alpha.
\end{split}
\end{eqnarray}
Following the developments in \cite{CMS},  given $\epsilon>0,$ one can find a path $\gamma\in \mathcal{D}_T$ with $N(T,\gamma)=T$ and the jump times of $\gamma$ are separated by $\epsilon$ and $H_T(\gamma)>(\alpha-\delta)T$ where $\delta\to0$ as $\epsilon\to0.$ Moreover, writing $\eta\sim\gamma$ to mean that the jump times of $\eta$ are within $\epsilon/3$ of the jump times of $\gamma$ and the two paths jump to the same sites at these jump times we have
\[P(X\sim\gamma|N(T,X)=T)\ge \frac{T!}{T^T}\lt(\frac{\epsilon}{6}\rt)^T\sim \sqrt{T}\lt(\frac{\epsilon}{6e}\rt)^T.\]
In addition, for $X\sim \gamma$ one has eventually, $H_T(X)\ge(\alpha-\delta)T$ with $\delta\to0$ as $\epsilon\to0.$
Thus,
\begin{eqnarray}
\begin{split} \label{eq:gammalb}
\lim_{\be \to \8}& \lim_{T \to \8}   \frac{1}{\beta T}\ln \Ek[e^{\be H_T(x)}|N(T,X)=T]\\
\quad\ge&\lim_{\be \to \8} \lim_{T \to \8}   \frac{1}{\beta T}\ln \Ek[e^{\be H_T(x)},X\sim \gamma|N(T,X)=T]\\
\quad\ge &\lim_{\be \to \8} \lim_{T \to \8}   \frac{1}{\beta T}\ln e^{\be (\alpha-\delta)T}P_\kappa(X\sim \gamma|N(T,X)=T)\\
\quad\ge &\lim_{\be \to \8} \lim_{T \to \8}   \frac{1}{\beta T}\ln \lt(e^{\be (\alpha-\delta)T}\sqrt{T}\lt(\frac{\epsilon}{6e}\rt)^T\rt)\\
\quad\ge&(\alpha-\delta)
\end{split}
\end{eqnarray}
and letting $\epsilon\to0$ and therefore $\delta\to0$ we have established the claim (\ref{eq:stchely3}).
Then,  (\ref{eq:stchely3}) and (\ref{eq:gammalb}) imply (i), uniformly for $r \in [\epsilon,\epsilon^{-1}]$ for all 
$\epsilon \in (0,1]$. 

By  (\ref{eq:stchely1}) and (\ref{eq:gammaub}) we have
\begin{equation}
 \label{eq:stchely4}
\Gm( \beta, r) \leq \alpha \beta \sqrt{r},
\end{equation}
Combining (\ref{eq:valfreeenergy}) and (\ref{eq:stchely4}),
$$
\Pskb \leq 
\sup_{r \geq 0} \left\{ \alpha \be \sqrt{{r}}
- \textup{I}_\kappa(r) \right\}.
$$
For the converse direction, consider $r_m$ the (unique) maximizer
of  $\alpha \be \sqrt{{r}} - I_\kappa(r)$. Note that
$r_m \sim \frac{\alpha^2 \be^2}{4 \ln^2(\be^2/\kappa)}$ as $\be^2/\kappa \to \8$ and write
\begin{eqnarray*}
\Pskb 
&\geq&
\Gm(\be, r_m) -  \textup{I}_\kappa(r_m)\\
&=& {r_m} \Gm(r_m^{-1/2}\be, 1) -  \textup{I}_\kappa(r_m) \qquad({\rm by\ }
(\ref{eq:scalingGmseul}))
\\
&=& {r_m}^{-1/2}\be (1+ \epsilon(r_m^{-1/2}\be)) -  \textup{I}_\kappa(r_m) 
\qquad(\lim_{u \to 0} \epsilon(u)=0, {\rm see\ }
(\ref{eq:stchely3}))
\\
&=& (1+ \epsilon(r_m^{-1/2}\be)) \times  {\rm r.h.s.\ of\ }(\ref{eq:huit}).
\end{eqnarray*}
Thus, the first claim in (ii) is proved. The second one is clear.
We finish by (iii), which means that all maximizers are of the 
indicated order of magnitude. For $\epsilon >0$, using (\ref{eq:stchely4})
and the definition of $r_m$,
\begin{eqnarray*}
\sup \{ \Gm(\be,r) -  \textup{I}_\kappa(r): |r -r_m| \geq  r_m \epsilon\}
&\leq&
\sup \left\{ \alpha \be \sqrt{r} -  \textup{I}_\kappa(r); |r -r_m| \geq  r_m \epsilon
\right\}\\
&\leq& (1- \delta)
\sup \{ \alpha \be \sqrt{r} -  \textup{I}_\kappa(r); r \geq 0\}
\\
& \leq &
(1-\delta') \Pskb
\end{eqnarray*}
for some positive $\delta, \delta'$. 
\end{proof}
\section{Regularity of the favourite attributes} \label{sec:attributes}

In this section, we give the 

\begin{proof}[Proof of   Proposition \ref{prop:1}] We denote by $\|x\|=\max_{i \leq d}|x_i|$ the
supremum norm on the lattice. Let $0<a<b<\ln 2$. 

(i) Observe that provided $n$ is sufficiently large that $2^n/t>\kappa,$
\begin{eqnarray}
\begin{split}
P\left(\sup_{2^n\le \|x\|<2^{n+1}} f(x,t)>e^{-bn2^{n}}\right)&\le4^d2^{nd}\max_{2^n\le\|x\|<2^{n+1}}P(f(x,t)>e^{-bn2^{n}})\\
\le4^d 2^{nd}e^{bn2^{n}}& \max_{2^n\le\|x\|<2^{n+1}}E E_\kappa \left[ \exp \{ \beta H_t(X)\}\delta_x(X(t)) \right]\\
=4^d 2^{nd}e^{bn2^{n}}& \max_{2^n\le\|x\|<2^{n+1}}E_\kappa \left[E\left[ \exp \{\beta H_t(X)\}]\right]\delta_x(X(t))\right]\\
\le4^d 2^{nd}e^{bn2^{n}}&e^{\beta^2 t/2} P_\kappa(N(t,X)\ge 2^n)\\
\le4^d  2^{nd}e^{bn2^{n}}&e^{\beta^2t/2} e^{-2^{n}\log\left(2^n/\kappa t\right)+(2^n/t-\kappa)t}.
\end{split}
\end{eqnarray}
Thus,
\[\sum_{n=0}^\infty P(\sup_{2^n\le \|x\|<2^{n+1} }f(x,t)>e^{-bn2^{n}})<\infty,\]
and consequently, given almost any realization of $\mathcal{W},$ there is an $N_1$ such that 
\[f(x,t)\le e^{-bn2^{n}},\,\,\mbox{for}\, \|x\|\ge 2^n,\,\,n\ge N_1.\]
This implies that for some $N_1=N_1(\mathcal{W}),$
\[\sup_{x \in{{\ZZ^d}}}f(x,t)=\max_{x\in{{\ZZ^d}},\|x\|\le 2^{N_1}}f(x,t),\]
which implies the desired property.

(ii) Setting $\tau_x=\inf\{s\ge0:X(s)=x\},$ we have
\begin{align}
\begin{split}\label{favpath}
P(\sup_{2^n\le \|x\|<4^d 2^{n+1}} \sup_{ t\le T}f(x,t,T)>e^{-bn2^{n}})&\le4^d 2^{nd}\max_{2^n\le\|x\|<2^{n+1}}P(\sup_{ t\le T}f(x,t,T)>e^{-bn2^{n}})\\
\le4^d 2^{nd}e^{bn2^{n}}&\max_{2^n\le\|x\|<2^{n+1}}E\left[ \sup_{ t\le T}E_\kappa \left[ \exp \{ \beta H_T(X)\}\delta_x(X(t)) \right]\right]\\
\le4^d 2^{nd}e^{bn2^{n}}& \max_{2^n\le\|x\|<2^{n+1}}EE_\kappa \left[  \sup_{ t\le T}\exp \{ \beta H_T(X)\}\delta_x(X(t)) \right]\\
=4^d 2^{n}e^{bn2^{n}}& \max_{2^n\le\|x\|<2^{n+1}}EE_\kappa \left[ \exp \{ \beta H_T(X)\}; \tau_x\le T \right]\\
=4^d 2^{nd}e^{bn2^{n}} &\max_{2^n\le\|x\|<2^{n+1}}E_\kappa \left[E\left[ \exp \{ \beta H_T(X)\}]\right];\tau_x\le T\right]\\
\le4^d 2^{nd}e^{bn2^{n}}&e^{ \beta^2 T/2} P_\kappa(N( T,X)\ge 2^n)\\
\le 4^d 2^{nd}e^{bn2^{n}}&e^{\beta^2T/2} e^{-2^{n}\log\left(2^n/\kappa T\right)+(2^n/T-\kappa)T}.
\end{split}
\end{align}
In the same way as before, we now conclude that $f(x,t,T)$ exhibits superexponential decay in $x$ and as well that for some $N_2=N_2(\mathcal{W},T ),$
\[\sup_{x\in{{\ZZ^d}}}f(x,t,T)=\max_{x\in{{\ZZ^d}},\|x\|\le 2^{N_2}}f(x,t,T),\,0\le t\le T.\]

(iii) For $X \in {\mathcal D}_t$, define the time-reversed path $\hat X^{(t)} \in {\mathcal D}_t$ with
$\hat X^{(t)}(s)=X(t-s)$. Since the symmetric simple random walk is reversible, we have
\begin{eqnarray*}
f(x,t) &=& E_\kappa[\exp\{\beta H_t(X)\}  \delta_x(X(t)) ] \\
&=&  E_\kappa^x[\exp\{\beta H_t(\hat X^{(t)})\}  \delta_0(X(t)) ]. 
\end{eqnarray*}•
By (\ref{pam}), we obtain
$$
f(x,t)=    \delta_x(0)+ \kappa \int_0^t \Delta f(x,s) ds + \beta \int_0^t f(x,s) \circ dW_x(s).
$$
Since $f$ is positive, we see that $f(x,\cdot)$ has the same regularity as $W_x$.
\medskip

(iv) Define
\begin{equation} \label{eq:c}
\left\{
\begin{array}{l}
H_{t,T}^x(X)= \sum_{y \neq x} \int_t^T  \delta_y(X(s-t)) [dW_y(s)-dW_x(s)],\\
H_t^x(X)=H_{0,t}^x(X),\\
f^x(x,t,T)=f(z,t,T) \times  e^{-\beta W_x(T)}.
\end{array}•
\right.
\end{equation}•
By Markov property, we have
\begin{equation}\label{eq:b}
f^x(z,t,T)= f^x(z,t) \times g^x(z,t,T),
\end{equation}•
with
\begin{equation*} 
\left\{
\begin{array}{l}
f^x(z,t)= E_\kappa[ e^{\beta H_t^x(X)}  \delta_z(X(t)) ,\\
g^x(z,t,T)= E_\kappa^z[ e^{\beta H_{t,T}^x(X)}] .\\
\end{array}•
\right.
\end{equation*}•
By reversibility under $P_\kappa$, 
$$
f^x(z,t)=  E_\kappa^z[ e^{\beta H_t^x(\hat X^{(t)})}  \delta_0(X(t))].
$$
Similar to (\ref{pam}), we obtain for $z\neq x$,
$$
f^x(z,t)=    \delta_z(0)+ \kappa \int_0^t \Delta_z f^x(z,s) ds + \beta \int_0^t f^x(z,s) \circ d[W_z(s)-W_x(s)],
$$
though the last term vanishes for $z=x$,
\begin{equation}\label{eq:a}
f^x(x,t)=    \delta_x(0)+ \kappa \int_0^t \Delta_z f^x(x,s) ds .
\end{equation}•
By definition of $g^x(z,t,T)$, we have a similar identity,
$$
g^x(x,t,T)=1 + \kappa \int_t^T \Delta_z g^x(x,s,T) ds .
$$
Combining this with (\ref{eq:c}), (\ref{eq:b}), (\ref{eq:a}), we conclude that 
$t \mapsto f^x(z,t,T)$ is differentiable on $(0,T)$ with derivative 
$$
\frac{d}{dt} f(x,t,T) =  e^{\beta W_x(T)} \kappa \big( g^x(x,t,T) \Delta_z f^x(x,t) -  f^x(x,t) \Delta_z g^x(x,t,T) \big),
$$
which is continuous this interval, with probability 1.
\end{proof}

{\bf Acknowledgements:} FC thanks the Fields institute for hospitality during this work. 






\begin{thebibliography}{}
\bibitem{AmCoQu11}
G. Amir, I. Corwin, J. Quastel:
Probability Distribution of the Free Energy of the Continuum Directed Random Polymer in 1+1 dimensions.  
{\it Comm. Pure Appl. Math.} {\bf 64}, 466--537 (2011)
\bibitem{AuLo10}
A.~Auffinger, O.~Louidor:
Directed polymers in random environment with heavy tails. 
{\it  Comm. Pure Appl. Math.,} {\bf 64} (2011),  183--204
\bibitem{Bertin12}
P.~Bertin: 
Very strong disorder for the parabolic Anderson model in low dimensions. Preprint 2012.
{\tt
http://arxiv.org/abs/1212.4737}
\bibitem{BoCo12}
A.~Borodin, I.~Corwin:
On moments of the parabolic Anderson model.
Preprint 2012.
{\tt  http://arxiv.org/abs/1211.7125
}
\bibitem{CaTiVi08}
A.~Cadel, S.~Tindel, F.~Viens: 
Sharp asymptotics for the partition function of some continuous-time directed polymers.  
{\it Potential Anal.} {\bf  29}  (2008),  139--166.
\bibitem{CaHu02}
Carmona, P., Hu Y. :
On the partition function of a directed polymer in a random
environment.
{\it Probab. Theory Related Fields}  {\bf 124}  (2002), 431--457.
\bibitem{CH}
P.~Carmona, Y.~Hu:
Strong disorder implies strong localization for directed polymers 
in a random environment
{\it  ALEA Lat. Am. J. Probab. Math. Stat.
} {\bf 2}  (2006) 
217--229
\bibitem{CKM}
R.~Carmona, L.~Koralov, S.~Molchanov: 
Asymptotics for the almost sure Lyapunov exponent for the solution of the parabolic Anderson problem. 
{\it Random Oper. Stochastic Equations} {\bf{9}} (2001), no. 1, 77-86. 
\bibitem{CaMo94}
R.~Carmona, S.~Molchanov:
{\it Parabolic Anderson problem and intermittency.}
Mem. Amer. Math. Soc. 108 (1994)
\bibitem{CSYo}
F.~Comets, T.~Shiga,N.~Yoshida: 
Directed polymers in a random environment: path localization and strong disorder. 
{\it Bernoulli }{\bf 9} (2003), no. 4, 705--723
\bibitem{CSY}
F.~Comets, T.~Shiga,N.~Yoshida:
{\it Probabilistic analysis of directed polymers in a random environment: a review.}
Stochastic analysis on large scale interacting systems, 115--142, Adv. Stud. Pure Math., 39, Math. Soc. Japan, Tokyo, 2004.
\bibitem{Corwin11} I~Corwin:
The Kardar-Parisi-Zhang equation and universality class.
{\it Random Matrices: Theory and Applications,} 1 (2012).
{\tt http://arxiv.org/abs/1106.1596}
\bibitem{CrGaMo10} 
M. Cranston, D. Gauthier, T. Mountford:
On large deviations for the parabolic Anderson model. 
{\it Probab. Theory Related Fields} {\bf 147} (2010),  349--378.
\bibitem{CMS}
M.~Cranston, T.~Mountford, T.~Shiga:
Lyapunov exponents for the parabolic Anderson model.  
{\it Acta Math. Univ. Comenian. (N.S.)}  {\bf 71}  (2002),  163--188
\bibitem{KLMS}
W.~K\"onig, H.~Lacoin, P.~M\"orters, N.~Sidorova:
 A two cities theorem for the parabolic Anderson model.  
{\it Ann. Probab.} {\bf  37}  (2009),   347--392
\bibitem{GadH06} J. G\"artner F. den Hollander: 
Intermittency in a catalytic random medium. 
{\it Ann. Probab.} {\bf  34} (2006),  2219--2287
\bibitem{Lac10}
H~Lacoin:
New bounds for the free energy of directed polymers in dimension 1+1 and 1+2. 
{\it Comm. Math. Phys.} {\bf  294} (2010),  471--503
\bibitem{Gu01} F. Guerra: 
{\it Sum rules for the free energy in the mean field spin glass model. }
Mathematical physics in mathematics and physics (Siena, 2000), 161�170, Fields Inst. Commun., 30, Amer. Math. Soc., Providence, 2001
\bibitem{GuTo02}
F. Guerra, F. Toninelli: The thermodynamic limit in mean field spin glass models. 
{\it Comm. Math. Phys.} {\bf 230} (2002),  71--79.
\bibitem{MRT08}D.~Marquez-Carreras, R.~Rovira, S.~Tindel
A model of continuous time polymer on the lattice.
{\it Commun. Stoch. Anal.} {\bf  5} (2011),  103--120.
\bibitem{Moreno10}
G.  Moreno Flores:
Asymmetric directed polymers in random environments. 
{\tt http://arxiv.org/abs/1009.5576}
\bibitem{MoQuRe11}
G.~Moreno Flores, J.~Quastel, D.~Remenik
Endpoint distribution of directed polymers in 1+1 dimensions.
To appear, Comm. Math. Phys.. 
{\tt http://arxiv.org/abs/1106.2716}
\bibitem{MoOc07}
J. Moriarty, N. O'Connell: 
On the free energy of a directed polymer in a Brownian environment. 
{\it Markov Process. Related Fields} {\bf 13} (2007),  251�266
\bibitem{New} C. Newman:
{\it Topics in disordered systems}. Lectures in Mathematics ETH Z\"urich. Birkh\"auser 
Verlag, Basel, 1997.
\bibitem{Nua06}
D.~Nualart: 
{\it The Malliavin calculus and related topics} (2nd Ed.). 
Springer-Verlag, Berlin, 2006
\bibitem{RoTi05}
C.~Rovira, S.~Tindel:
On the Brownian-directed polymer in a Gaussian random environment.
{\it J. Funct. Anal.} {\bf  222}  (2005),  178--201. 
\bibitem{SepVa10}
T. Sepp\"al\"ainen, B. Valk\'o:
 Bounds for scaling exponents for a $1+1$ dimensional directed polymer in a Brownian environment. 
 {\it ALEA Lat. Am. J. Probab. Math. Stat.} {\bf 7} (2010), 451--476
\bibitem{Talabook1} M. Talagrand: 
{\it Spin Glasses: A Challenge for Mathematicians. Cavity and Mean Field Models.}
 Springer, Berlin (2003)
 \bibitem{Talaparisi}
 M. Talagrand:  The Parisi formula. 
 {\it Ann. of Math.} (2) {\bf 163} (2006),  221--263.
\bibitem{To98}
A.~Toubol:
High temperature regime for a multidimensional Sherrington-Kirkpatrick 
model of spin glass.  
{\it Probab. Theory Related Fields} {\bf  110}  (1998),   497--534
\bibitem{UstZa00}
S.~\"Ust\"unel, M.~Zakai:
{\it Transformation of measure on Wiener space.}
Springer-Verlag, Berlin, 2000
\bibitem{Va07}
V.~Vargas: 
Strong localization and macroscopic atoms for directed polymers.  
{\it Probab. Theory Related Fields} {\bf  138}  (2007),   391--410

\end{thebibliography}
\end{document}